\newtheorem{theorem}{Theorem}[section]
\newtheorem{lemma}{Lemma}[section]
\newtheorem{corollary}{Corollary}[section]
\newtheorem{conjecture}{Conjecture}[section]
\newtheorem{example}{Example}[section]
\DeclareMathOperator{\inte}{int}
\DeclareMathOperator{\exte}{ext}
\newcommand{\symdif}{\bigtriangleup}
\begin{document}
\title{Almost generalized uniform matroids and excluded minors}
\author{Hyungju Park\thanks{Department of Mathematics, Seoul National University, Seoul 08826, Korea. E-mail address: \texttt{parkhyoungju@snu.ac.kr}}}
\date{August 30, 2023}
\maketitle
\begin{abstract}

We establish that matroids characterized by the Tutte polynomial $\sum_{i,j\ge 0}t_{i,j}x^iy^j$ with coefficients $t_{i,j}$ vanishing for $(i,j)\ge (k,l)$ precisely coincide with $(k,l)$-uniform matroids. This characterization implies that almost $(k,l)$-uniform matroids are exactly matroids with $t_{k,l}\le 1$ and $t_{i,j}=0$ if $(i,j)>(k,l)$. We also characterize excluded minors of almost $(k,l)$-uniform matroids in terms of Tutte polynomial coefficients. Finally, we construct an infinite family of excluded minors of almost $(k,l)$-uniform matroids which extend previously known cases of almost uniform and almost paving matroids.

\end{abstract}

\section{Introduction}
\emph{Paving matroids} are among the simplest examples of matroids. Due to the simplicity of paving matroids, several unsolved problems of matroids have been solved for this specific subclass. For example, Rota's basis conjecture \cite{MR2272246}, Stanley's conjecture on the $h$-vectors of matroids \cite{MR2950481} and the Merino-Welsh conjecture \cite{Ferroni2022a} are shown to hold for paving matroids. Some conjectures are disproved by a counterexample constructed in the class of paving matroids: a paving matroid whose Ehrhart polynomial has a negative coefficient is found by \citet{MR4396506}, which was the first counterexample to the Ehrhart positivity conjecture for matroid polytope \cite[Conjecture~2]{MR2556462} or more generally for integral generalized permutohedra \cite[Conjecture~1.2]{MR3869454}. Despite its simplicity, there is a conjecture suggesting that almost all matroids are paving matroids \cite[Conjecture~1.6]{MR2821559}. This provides a compelling reason for considering the class of paving matroids as a testing ground for conjectures in matroid theory.

A remarkable feature of the class of paving matroids is the elegant simplicity with which they can be characterized from various perspectives. We present several perspectives that have paved the way for the generalization of paving matroids. Paving matroids are most commonly defined by matroids with the girth(the size of its smallest circuit) equal to the rank which is translated to a simple restriction on matroid polytopes. \citet{MR3663497} introduced split matroids as a generalization of paving matroids by weakening the restriction of matroid polytopes of paving matroids and obtained results on the dimension and rays of Dressians.

Paving matroids can also be characterized in terms of their hyperplanes. A collection of subsets of a finite set $E$ is the set of hyperplanes of a paving matroid of rank $d$ if and only if it is a $(d-1)$-partition which was introduced by \citet{MR0099931}, see \cite[Proposition~2.1.24]{MR2849819}. \citet{MR4499860} provided another generalization of paving matroids in this direction. They found a subclass of split matroids, called elementary split matroids,  characterized by the hypergraph representation which generalizes the hyperplane characterization of paving matroids. In \cite{MR4499860}, authors provided another proof of five excluded minors of split matroids, which were found in \cite{MR4396506} and first proved by \citet{MR4224380}, and classified binary split matroids.

An alternative approach to defining paving matroids is through the excluded minor $U_{2,2}\oplus U_{0,1}$. The first generalization of paving matroids in terms of the excluded minor is $k$-paving matroids studied by \citet{MR1639777}. \citet{MR4290142} further extended this generalization with the notion of $(k,l)$-uniform matroids. A $(k,l)$-uniform matroid is defined by a matroid with no $U_{k,k}\oplus U_{0,l}$ minor. Ordinary paving matroids are $(2,1)$-uniform matroids and $k$-paving matroids are $(k+1,1)$-uniform matroids by \cite[proposition~2]{MR1639777}. In \cite{MR4290142}, it is shown that there are only finitely many $(k,l)$-uniform matroids that are $\mathbb{F}_q$-representable for each $(k,l)$ and binary $(2,2)$-uniform matroids are classified as a natural next step of the result of binary paving matroids \cite{MR943728} and binary $2$-paving matroids \cite{MR1639777}.

Finally, paving matroids are characterized by the support of their Tutte polynomials. \citet[Proposition~7.9]{MR0309764} proved that a matroid with the Tutte polynomial $\sum_{i,j}t_{i,j}x^iy^j$ is paving if and only if $t_{i,j}=0$ whenever $(i,j)\geq(2,1)$. Throughout this paper, the notation $(i,j)>(k,l)$ or $(i,j)\geq(k,l)$ will be used to denote that the inequality holds for each component. It is natural to ask which class of matroids satisfy $t_{i,j}=0$ for all $(i,j)\ge (k,l)$ for some fixed pair of positive integers $k,l$. Our first main result, proved in Section \ref{sec:2}, of this paper is that the answer is exactly the class of $(k,l)$-uniform matroids.

\begin{theorem}\label{uni}

For a pair of positive integers $(k,l)$, a matroid $M$ is $(k,l)$-uniform if and only if the Tutte polynomial $T_M=\sum_{i,j}t_{i,j}x^iy^j$ of $M$ satisfies $t_{i,j}=0$ for all $(i,j)\ge (k,l)$.

\end{theorem}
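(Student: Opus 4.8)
\medskip

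The plan is to recast the theorem as an equivalence between a minor condition and a coefficient condition, and to prove the two directions separately. Since the Tutte polynomial is multiplicative over direct sums and $T_{U_{k,k}}=x^k$, $T_{U_{0,l}}=y^l$, the target minor has Tutte polynomial $T_{U_{k,k}\oplus U_{0,l}}=x^ky^l$. As $(k,l)$-uniformity is by definition the absence of a $U_{k,k}\oplus U_{0,l}$ minor, the theorem is equivalent to the assertion
\[
M\text{ has a }U_{k,k}\oplus U_{0,l}\text{ minor}\iff t_{i,j}\neq 0\text{ for some }(i,j)\ge(k,l).
\]
I would prove this equivalence for all integers $k,l\ge 0$ (not just positive ones), since the extra generality is what makes the induction below close up.

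For the direction sending a minor to a nonzero coefficient, the key input is that Tutte coefficients are nonnegative together with the deletion--contraction rules. I would first establish a one-step lemma: if $N'$ is obtained from $N$ by deleting or contracting a single element and $t_{a,b}(N')\neq 0$, then $t_{i,j}(N)\neq 0$ for some $(i,j)\ge(a,b)$. This is immediate from a case split: if the removed element is a loop or a coloop then $T_N=yT_{N'}$ or $T_N=xT_{N'}$, shifting the witness to $(a,b+1)$ or $(a+1,b)$; and if it is neither, then $T_N=T_{N\setminus e}+T_{N/e}$ forces $t_{a,b}(N)\ge t_{a,b}(N')>0$. Writing a given minor as a sequence of single deletions and contractions and chaining the lemma (the lower bound only increases) yields the claim; applying it with $N=M$ and the minor $U_{k,k}\oplus U_{0,l}$, whose coefficient $t_{k,l}=1$ is nonzero, produces a nonzero $t_{i,j}$ with $(i,j)\ge(k,l)$.

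For the converse, producing the minor from a nonzero coefficient, I would induct on $|E(M)|$. Assume $t_{i,j}(M)\neq 0$ with $(i,j)\ge(k,l)$. If $M$ has an element $e$ that is neither a loop nor a coloop, then $t_{i,j}(M)=t_{i,j}(M\setminus e)+t_{i,j}(M/e)$, so one of the two smaller matroids already has a nonzero coefficient at $(i,j)$ and, by induction with the same $(k,l)$, contains the desired minor. If $e$ is a coloop, then $M=(M\setminus e)\oplus U_{1,1}$ and $t_{i,j}(M)=t_{i-1,j}(M\setminus e)$; when $i-1\ge k$ I recurse directly on $M\setminus e$, and when $i=k$ I recurse on $M\setminus e$ with parameters $(k-1,l)$ and re-attach the coloop, using that a coloop is a direct summand to turn a $U_{k-1,k-1}\oplus U_{0,l}$ minor into $U_{k,k}\oplus U_{0,l}$. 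The loop case is symmetric, replacing $x$ by $y$, the parameters $(k,l)$ by $(k,l-1)$, and $U_{1,1}$ by $U_{0,1}$.

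The main obstacle is precisely these boundary subcases, where the witnessing exponent meets the threshold ($i=k$ for a coloop, $j=l$ for a loop). Handling them is what forces the statement to be proved for all $(k,l)\ge(0,0)$ at once, so that a parameter may legitimately be decremented, and it is also where I rely on the structural fact that loops and coloops are direct summands, together with $U_{k-1,k-1}\oplus U_{1,1}\oplus U_{0,l}=U_{k,k}\oplus U_{0,l}$. One should check that the exponents never go negative—for instance, a coloop forces the $x^0$ coefficient to vanish, so $i\ge 1$ whenever a coloop is present—which keeps every invocation of the induction hypothesis valid.
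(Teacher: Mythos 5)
Your proposal is correct, but it proves the substantive direction by a genuinely different route than the paper. The direction ``minor $\Rightarrow$ nonzero coefficient'' is the same in both treatments: the paper cites Brylawski and sketches exactly the deletion--contraction chaining you spell out, so there you are just making the paper's sketch precise (using nonnegativity of the coefficients, which is immediate since they count bases by activities). For the converse, however, the paper does not induct at all: it proves directly that a $(k,l)$-uniform matroid has vanishing coefficients, by taking a hypothetical basis $B$ with $(\inte(B),\exte(B))\ge(k,l)$, forming the corank-$k$ flat $F=\mathrm{cl}(B-S)$ where $S$ consists of $k$ internally active elements, invoking the previously known equivalence (cited from \cite{MR4290142}) between $(k,l)$-uniformity and ``every corank-$k$ flat has nullity less than $l$'' to find an externally active element $v\notin F$, and then deriving a contradiction from the circuit--cocircuit intersection lemma $|C\cap C^*|\ne 1$ applied to $C(v,B)$ and $C^*(w,B)$ for some $w\in S\cap C(v,B)$. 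Your argument instead proves the contrapositive by induction on $|E(M)|$ via deletion--contraction, decrementing the parameters $(k,l)$ when the removed element is a coloop or loop and reattaching it as a direct summand afterwards; your observation that the statement must be proved for all $(k,l)\ge(0,0)$, and that a coloop forces $i\ge 1$ so the decremented parameter stays nonnegative, is exactly what makes this close up, and I verified the boundary subcases work. The trade-off: your proof is self-contained and elementary --- it needs neither the flat-nullity characterization of $(k,l)$-uniformity nor the circuit--cocircuit lemma, and it yields the slightly more general statement for all nonnegative parameters --- whereas the paper's proof is shorter (given the cited equivalence), avoids induction, and exhibits the structural link between basis activities and corank-$k$ flats, which is the conceptual content the rest of the paper builds on.
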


This new characterization of $(k,l)$-uniform matroids provides a way to study almost $(k,l)$-uniform matroids and their excluded minors. For a class of matroids $\mathcal{M}$, an almost $\mathcal{M}$ matroid is a matroid $M$ such that for every element $v$ of $M$, either $M-v$ or $M/v$ is contained in $\mathcal{M}$. Almost $\mathcal{M}$ matroids were studied for various class of matroids including regular \cite{MR1168966}, binary \cite{MR3131892}, graphic \cite{MR1900004} and paving matroids \cite{vega2020excluded}. In Section \ref{sec:3}, we also provide the Tutte polynomial coefficient characterization of almost $(k,l)$-uniform matroids.

\begin{theorem}\label{almuni}

A matroid $M$ is almost $(k,l)$-uniform if and only if the Tutte polynomial $T_M=\sum_{i,j}t_{i,j}x^iy^j$ of $M$ satisfies $t_{k,l}\le 1$ and $t_{i,j}=0$ if $(i,j)> (k,l)$.

\end{theorem}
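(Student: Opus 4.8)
Throughout, write $Q=\{(i,j):i\ge k,\ j\ge l\}$ for the closed quadrant and read $(i,j)>(k,l)$ in the strict product order (that is, $(i,j)\ge(k,l)$ with $(i,j)\ne(k,l)$), so that the stated condition asserts $t_{i,j}(M)=0$ on $Q\setminus\{(k,l)\}$ together with $t_{k,l}(M)\le 1$. The two standing facts I will lean on are: Theorem~\ref{uni}, rephrased as ``$N$ is $(k,l)$-uniform iff $t_{i,j}(N)=0$ for all $(i,j)\in Q$''; and the nonnegativity of Tutte coefficients, so that this is equivalent to $\sum_{(i,j)\in Q}t_{i,j}(N)=0$. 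I will also use the deletion--contraction recurrences $T_M=T_{M\setminus v}+T_{M/v}$ for $v$ ordinary, $T_M=yT_{M\setminus v}$ for $v$ a loop, and $T_M=xT_{M\setminus v}$ for $v$ a coloop, with $M\setminus v=M/v$ in the last two cases.

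The plan for the ``if'' direction is short. Assuming $t_{k,l}(M)\le1$ and $t_{i,j}(M)=0$ on $Q\setminus\{(k,l)\}$, I would fix an element $v$. If $v$ is ordinary, nonnegativity gives $t_{i,j}(M\setminus v)\le t_{i,j}(M)$ and $t_{i,j}(M/v)\le t_{i,j}(M)$, so both minors already vanish on $Q\setminus\{(k,l)\}$; since $t_{k,l}(M\setminus v)+t_{k,l}(M/v)=t_{k,l}(M)\le1$, one of the two corner coefficients is $0$, and that minor then vanishes on all of $Q$, hence is $(k,l)$-uniform. If $v$ is a loop, then $t_{i,j}(M\setminus v)=t_{i,j+1}(M)$, and $(i,j)\in Q$ forces $(i,j+1)\in Q\setminus\{(k,l)\}$, so $M\setminus v=M/v$ is $(k,l)$-uniform; the coloop case is identical with $x$ and $y$ interchanged. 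In every case one of $M\setminus v,\ M/v$ is $(k,l)$-uniform, so $M$ is almost $(k,l)$-uniform.

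The ``only if'' direction I would prove by strong induction on $|E(M)|$, the crucial enabling observation being that almost $(k,l)$-uniform matroids form a minor-closed class: since $(k,l)$-uniformity is minor closed, whenever $u\ne v$ and (say) $M\setminus u$ is $(k,l)$-uniform, then its minors $(M\setminus v)\setminus u=(M\setminus u)\setminus v$ and $(M\setminus v)/u=(M/u)\setminus v$ are $(k,l)$-uniform, so $M\setminus v$ (and dually $M/v$) is again almost $(k,l)$-uniform. With this, the ordinary case is forced: by the inductive hypothesis both $M\setminus v$ and $M/v$ satisfy the coefficient condition, so $t_{i,j}(M)=t_{i,j}(M\setminus v)+t_{i,j}(M/v)=0$ for $(i,j)\in Q\setminus\{(k,l)\}$, while the almost property makes one minor $(k,l)$-uniform (contributing $0$ at the corner), giving $t_{k,l}(M)\le1$.

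The genuinely delicate case, and the one I expect to be the main obstacle, is when the element is a loop or a coloop, since there coefficient additivity is replaced by a shift. For a loop $v$ one has $t_{i,j}(M)=t_{i,j-1}(M\setminus v)$, so the coefficient condition for $M$ at $(k,l)$ is exactly the coefficient condition for $M\setminus v$ at $(k,l-1)$, and a coloop shifts $(k,l)$ to $(k-1,l)$. To close the induction I must therefore produce the shifted statement, and for that I would prove a small matroid lemma: if $N$ has a loop $e$ and is $(k,l)$-uniform, then $N\setminus e$ is $(k,l-1)$-uniform (otherwise adjoining the loop $e$ back to a $U_{k,k}\oplus U_{0,l-1}$ minor of $N\setminus e$ would yield a $U_{k,k}\oplus U_{0,l}$ minor of $N$). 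Combining this with the commutation of deletion and contraction shows that if $M$ has a loop $v$ then $M\setminus v$ is almost $(k,l-1)$-uniform, so the inductive hypothesis applies at the shifted parameter. Carrying out this loop/coloop bookkeeping carefully, including the boundary parameters $l=0$ and $k=0$ (where ``$(k,0)$-uniform'' simply means rank less than $k$), is where I anticipate the real work lies; by contrast the ordinary case and the whole ``if'' direction are essentially dictated by nonnegativity and Theorem~\ref{uni}.
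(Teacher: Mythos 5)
Your ``if'' direction is exactly the paper's: deletion--contraction, nonnegativity of Tutte coefficients, and Theorem~\ref{uni} applied to whichever minor inherits a vanishing corner coefficient. (You are also right to read $(i,j)>(k,l)$ as $(i,j)\ge(k,l)$ with $(i,j)\ne(k,l)$: under a componentwise-strict reading the statement would be false --- $U_{k+1,k+1}\oplus U_{0,l}$ has Tutte polynomial $x^{k+1}y^l$, satisfies that weaker coefficient condition, yet is not almost $(k,l)$-uniform --- and the product-order reading is the one the paper's own Lemma~\ref{allem} and Corollary~\ref{exc} require.) Your ``only if'' direction, by contrast, is a genuinely different route. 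The paper argues contrapositively and without induction: if $t_{i,j}>0$ for some $(i,j)>(k,l)$, it extracts a $U_{i,i}\oplus U_{0,j}$ minor via Theorem~\ref{uni}, notes by Lemma~\ref{allem} that this minor is not almost $(k,l)$-uniform, and invokes minor-closedness; if $t_{k,l}\ge 2$, it takes two $(k,l)$-bases $B_1,B_2$, lets $v$ be the largest element of $B_1\symdif B_2$ (automatically neither a loop nor a coloop), shows $v$ is neither internally active in the basis containing it nor externally active in the other, and uses the activity-preservation Lemma~\ref{lem:act} to conclude that both $M-v$ and $M/v$ retain a basis with activities $\ge (k,l)$. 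Your induction on $|E(M)|$ replaces that entire basis-activity analysis with minor-closedness plus coefficient additivity, which is a clean trade: no symmetric-difference trick and no analogue of Lemma~\ref{lem:act} are needed.

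The one genuine weakness is exactly where you flagged it, and as written the induction does not close. Your loop/coloop case shifts the parameter to $(k,l-1)$ or $(k-1,l)$, so the induction hypothesis must be formulated and proved at degenerate parameters $(k,0)$, $(0,l)$, and (after repeated shifts on a matroid consisting only of loops and coloops) even $(0,0)$; there, ``$(k,l)$-uniform,'' Theorem~\ref{uni}, and the theorem being proved all have to be re-defined and re-verified, and you have deferred all of that work. The repair, however, is easy and makes the boundary analysis unnecessary: in the inductive step, apply deletion--contraction at an ordinary element whenever one exists --- your ordinary case alone then delivers the full conclusion for $M$ --- and if no ordinary element exists, then $M\cong U_{i,i}\oplus U_{0,j}$, whose Tutte polynomial is the single monomial $x^iy^j$, so the equivalence of the coefficient condition with almost $(k,l)$-uniformity can be checked by the direct computation that is the paper's Lemma~\ref{allem}. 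With that amendment your argument is complete, stays entirely within the range $k,l\ge 1$, and constitutes a correct alternative proof.
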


In general, if $\mathcal{M}$ is a minor-closed class of matroids, the class of almost $\mathcal{M}$ matroids is also minor closed as observed in \cite{lara2020exploring, vega2020excluded}. Therefore, the class of almost $(k,l)$-uniform matroids is closed under taking minor. The result of the Theorem \ref{almuni} also manifests minor-closedness of $(k,l)$-uniform matroids from the deletion-contraction recursion of the Tutte polynomial. Next corollary states that excluded minors of almost $(k,l)$-uniform matroids also satisfy certain conditions on Tutte polynomial coefficients.

\begin{corollary}\label{exc}

A matroid $M$ is an excluded minor of almost $(k,l)$-uniform matroids if and only if $M\cong U_{k+1,k+1}\oplus U_{0,l}$, $M\cong U_{k,k}\oplus U_{0,l+1}$ or the following three conditions hold : 

\begin{itemize}
\item $M$ contains an element that is not a loop or a coloop.
    \item Its Tutte coefficients satisfy $t_{k,l}=2$ and $t_{i,j}=0$ if $(i,j)> (k,l)$.
    \item For every element $v\in M$ which is not a loop or a coloop, the $(k,l)$th coefficient $t_{k,l}^{M-v}$ of the Tutte polynomials of $M-v$ is $1$.
\end{itemize}

\end{corollary}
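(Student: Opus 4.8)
The plan is to translate the excluded-minor condition entirely into the language of Tutte coefficients via Theorems~\ref{uni} and~\ref{almuni}, and then run a short case analysis driven by the deletion--contraction recursions. Since the class of almost $(k,l)$-uniform matroids is minor-closed, I would first record that $M$ is an excluded minor precisely when $M$ itself is not almost $(k,l)$-uniform while $M-v$ and $M/v$ are almost $(k,l)$-uniform for every element $v$ (checking single-element minors suffices by minor-closedness). By Theorem~\ref{almuni} a matroid $N$ is almost $(k,l)$-uniform iff $t_{k,l}^{N}\le 1$ and $t_{i,j}^{N}=0$ for all $(i,j)>(k,l)$, so $M$ fails to be almost $(k,l)$-uniform iff $t_{k,l}^{M}\ge 2$ or $t_{i,j}^{M}\neq 0$ for some $(i,j)>(k,l)$. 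Throughout I would use the three Tutte recursions, namely $T_M=T_{M-v}+T_{M/v}$ when $v$ is neither a loop nor a coloop, $T_M=x\,T_{M-v}$ with $M-v=M/v$ when $v$ is a coloop, and $T_M=y\,T_{M-v}$ with $M-v=M/v$ when $v$ is a loop, together with the nonnegativity of Tutte coefficients.

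Next I would treat the case in which $M$ has an element $v$ that is neither a loop nor a coloop, aiming at condition~(c). Writing $t_{i,j}^{M}=t_{i,j}^{M-v}+t_{i,j}^{M/v}$ and using that both $M-v$ and $M/v$ are almost $(k,l)$-uniform, every coefficient with $(i,j)>(k,l)$ is a sum of two zeros, so $t_{i,j}^{M}=0$ for all $(i,j)>(k,l)$; as $M$ is not almost $(k,l)$-uniform this forces $t_{k,l}^{M}\ge 2$, while $t_{k,l}^{M}=t_{k,l}^{M-v}+t_{k,l}^{M/v}\le 1+1=2$ gives $t_{k,l}^{M}=2$. Applying the same identity to an arbitrary nonloop--noncoloop $v$ then forces $t_{k,l}^{M-v}=t_{k,l}^{M/v}=1$, yielding all three bullets of~(c). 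For the converse I would assume the bullets of~(c) and verify each single-element minor: for a nonloop--noncoloop $v$, nonnegativity turns $0=t_{i,j}^{M}=t_{i,j}^{M-v}+t_{i,j}^{M/v}$ (for $(i,j)>(k,l)$) into the vanishing of each summand, and $t_{k,l}^{M-v}=1$ forces $t_{k,l}^{M/v}=1$, so both minors meet Theorem~\ref{almuni}; for a coloop $v$ the relation $t_{i,j}^{M-v}=t_{i+1,j}^{M}$ together with $(k+1,l)>(k,l)$ and $(i,j)>(k,l)\Rightarrow(i+1,j)>(k,l)$ gives $t_{k,l}^{M-v}=0$ and $t_{i,j}^{M-v}=0$ for $(i,j)>(k,l)$, and symmetrically for a loop $v$ via $t_{i,j}^{M-v}=t_{i,j+1}^{M}$. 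Hence $M$ is an excluded minor.

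It remains to handle the case in which every element of $M$ is a loop or a coloop, so $M\cong U_{p,p}\oplus U_{0,q}$ with Tutte polynomial the single monomial $x^{p}y^{q}$. Here $M$ is not almost $(k,l)$-uniform exactly when $(p,q)>(k,l)$, and deleting a coloop (respectively a loop) replaces $(p,q)$ by $(p-1,q)$ (respectively $(p,q-1)$); minor-minimality therefore requires that lowering either available coordinate by one leaves the region strictly above $(k,l)$, which pins $(p,q)$ down to the two minimal elements $(k+1,l)$ and $(k,l+1)$ of that region. This gives exactly the sporadic matroids $U_{k+1,k+1}\oplus U_{0,l}$ and $U_{k,k}\oplus U_{0,l+1}$ of~(a) and~(b), and the converse check that these two are excluded minors is the same bookkeeping. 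Since (c) requires a nonloop--noncoloop element whereas (a) and (b) have none, the three cases are disjoint and exhaustive, so combining them yields the stated equivalence.

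I expect the only real friction to be the loop/coloop bookkeeping: because the additive recursion $T_M=T_{M-v}+T_{M/v}$ is unavailable for loops and coloops, those elements must be treated separately through the multiplicative recursions and a careful reading of the partial order near the corner $(k,l)$, in particular that $(k+1,l)$ and $(k,l+1)$ lie strictly above $(k,l)$ while $(k,l)$ does not. The other point demanding care is completeness in the monomial case, where one must argue that no direct sum of loops and coloops other than the two listed can be minor-minimal.
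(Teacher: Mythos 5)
Your proposal is correct and takes essentially the same route as the paper: split on whether $M$ has an element that is neither a loop nor a coloop, settle the all-loops-and-coloops case via Lemma \ref{allem}, and translate the remaining case through Theorem \ref{almuni} together with the deletion--contraction recursion to force $t_{k,l}^M=2$ and $t_{k,l}^{M-v}=t_{k,l}^{M/v}=1$. The only difference is that you write out the converse in full --- in particular the loop/coloop bookkeeping resting on $(k+1,l)>(k,l)$ and $(k,l+1)>(k,l)$ in the (correctly interpreted) componentwise partial order --- which the paper compresses into ``the converse direction is straightforward.''
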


It follows from Corollary \ref{exc} that if $N$ is an excluded minor of almost $(k,l)$-uniform matroids, then $N\oplus U_{a,a}\oplus U_{0,b}$ is an excluded minor of almost $(k+a, l+b)$-uniform matroids and vice versa. Therefore it is enough to consider loopless and coloopless excluded minors of almost $(k,l)$-uniform matroids. In Section \ref{sec:cons}, we construct infinitely many loopless and coloopless excluded minors of almost $(k,l)$-uniform matroids extending previously known cases of uniform and paving matroids \cite[Theorem~3.13, Theorem~4.1]{vega2020excluded}. Let $\tau(M)$ denote the \emph{truncation} of matroid $M$, defined as the matroid where independent sets consist of those from $M$ that are not bases.

\begin{theorem}\label{exc1}

Let $N$ be a loopless and coloopless matroid of rank $k+m$ on the set of cardinality $k+l+m$ with $k,l>0$. Then the matroid $$\tau^m(N\oplus U_{k+m,k+l+m})$$ is a loopless and coloopless excluded minor of almost $(k,l)$-uniform matroids.
    
\end{theorem}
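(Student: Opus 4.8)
The plan is to verify that $M:=\tau^m(N\oplus U_{k+m,k+l+m})$ satisfies the criterion of Corollary \ref{exc}. Since $M$ will turn out to be loopless and coloopless, it cannot be either of the two exceptional matroids $U_{k+1,k+1}\oplus U_{0,l}$ or $U_{k,k}\oplus U_{0,l+1}$ (both carry loops and coloops), so it is enough to check the three bulleted conditions: that $M$ is loopless and coloopless (which also supplies the required non-(co)loop element), that $t_{k,l}(M)=2$ while $t_{i,j}(M)=0$ for $(i,j)>(k,l)$, and that $t_{k,l}^{M-v}=1$ for every element $v$. The single engine driving all three is a transformation law for the Tutte polynomial under truncation, so I would establish that first.

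Writing $P=N\oplus U_{k+m,k+l+m}$, I would prove, by a direct manipulation of the corank--nullity generating function (or via $\tau(Z)=(Z+_f e)/e$ and deletion--contraction), the identity
\[
T_{\tau(Z)}(x,y)=\frac{T_Z(x,y)-T_Z(1,y)}{x-1}+(y-1)\,T_Z(1,y).
\]
Extracting coefficients shows that within each fixed column $j$ and each row $a\ge 1$ the truncated coefficient is the suffix sum $t_{a,j}(\tau Z)=\sum_{i>a}t_{i,j}(Z)$, while all growth of the $y$-degree is confined to row $a=0$. Iterating this in the range $a\ge 1$ gives the clean formula $t_{k,j}(\tau^m Z)=\sum_{i\ge k+m}\binom{i-k-1}{m-1}\,t_{i,j}(Z)$, so that only rows $i\ge k+m$ of a single column of $T_Z$ ever influence $t_{k,j}(\tau^m Z)$. (Recall all $t_{i,j}\ge 0$, so no cancellation occurs.) This is what makes the whole computation tractable.

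Next I would assemble the structure of $T_P=T_N\cdot T_{U_{k+m,k+l+m}}$. The uniform factor is supported on the axes, $T_{U_{k+m,k+l+m}}=X(x)+Y(y)$ with $\deg X=k+m$, $\deg Y=l$, and leading coefficients $1$; and since $N$ is loopless and coloopless of rank $k+m$ and nullity $l$, its extreme coefficients satisfy $[x^{k+m}]T_N=1$ and $[y^{l}]T_N=1$, each a constant in the other variable. For the support statement I would track, column by column, the maximal $x$-degree: in $T_P$ every column $j>l$ has $x$-degree at most $k+m$ (only the $T_N\!\cdot\!Y$ part reaches there), and each truncation lowers every positive column maximum by exactly one, so after $m$ steps each column $j>l$ has $x$-degree at most $k$; since the newly created high columns live only in row $0$, this yields $t_{i,j}(M)=0$ for all $(i,j)>(k,l)$. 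For the value at $(k,l)$ I would read off column $l$ of $T_P$ in rows $\ge k+m$: exactly two contributions reach row $k+m$, one from $([y^{l}]T_N)\,X$ and one surviving cross-term of $T_N\cdot Y$, each equal to $1$, so $t_{k+m,l}(P)=2$ and $t_{i,l}(P)=0$ for $i>k+m$; hence $t_{k,l}(M)=\binom{m-1}{m-1}\cdot 2=2$.

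For the deletion condition I would use that deletion commutes with truncation at non-coloops, $\tau^m(P)\setminus v=\tau^m(P\setminus v)$, which is valid because every intermediate truncation is coloopless, and then split into $v\in U_{k+m,k+l+m}$ and $v\in N$. In both cases $P\setminus v$ loses exactly one unit of nullity, so $[y^l]$ of the relevant factor drops out and only a single corner survives: row $k+m$ of column $l$ now carries coefficient $1$ rather than $2$, giving $t_{k,l}(M-v)=1$. Finally, looplessness is preserved since we always truncate to rank $\ge 2$, and truncation destroys coloops (a coloop of $\tau(Z)$ would force $r_Z(E-e)\le r(Z)-2$, which is impossible), so $M$ is loopless and coloopless. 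I expect the main obstacle to be the support analysis of the third paragraph: one must argue carefully that the interaction between the within-column suffix sums and the row-$0$ growth of the $y$-degree never produces a coefficient in the forbidden region $(i,j)>(k,l)$, and simultaneously that the two independent corner contributions to $t_{k+m,l}(P)$ are each exactly $1$, so that the target coefficient is precisely $2$ and not larger. The iterated suffix-sum formula is exactly what tames these two competing effects.
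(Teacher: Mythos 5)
Your proposal is correct, but it takes a genuinely different route from the paper's proof. The paper never writes a closed formula for the Tutte polynomial of a truncation: it argues combinatorially with basis activities. Its key tool (Lemma \ref{lem:mid}) fixes an order in which the ground set of $N$ precedes that of the uniform part and then pins down, for a basis $S\cup S'$ of $\tau^m(N\oplus U_{k+m,k+l+m})$, which summand can carry active elements according to $|S|$; the middle cases $|S|\neq k,k+m$ are eliminated via $(k,l)$-uniformity of truncations of $N$ (Corollary \ref{cor:max} together with Lemma \ref{lem:trun}), and each extreme case $|S|=k$, $|S|=k+m$ contributes exactly one $(k,l)$-basis, with a parallel activity analysis for the deletions. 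You replace all of this with the convolution identity $T_{\tau(Z)}=\bigl(T_Z(x,y)-T_Z(1,y)\bigr)/(x-1)+(y-1)T_Z(1,y)$, the resulting suffix-sum and binomial coefficient formulas (valid in rows $\ge 1$, which suffices since $k\ge 1$, with all new support confined to row $0$), multiplicativity of $T$ over direct sums, and the extreme-coefficient facts $[x^{k+m}]T_N=1$ and $[y^{l}]T_N=1$ for a loopless, coloopless $N$; I verified the identity, the iterated coefficient formula, and the corner bookkeeping, and they do give $t_{k,l}(M)=2$, vanishing on $(i,j)>(k,l)$, and $t_{k,l}^{M-v}=1$, which is exactly what Corollary \ref{exc} (in its loopless--coloopless case, where minor-closedness from Lemma \ref{minorcl} and Theorem \ref{almuni} make the three bullets sufficient) demands. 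Your route buys order-free, purely algebraic computations and explicit formulas for every coefficient of $\tau^m(\cdot)$, at the cost of having to prove the truncation identity; the paper's route stays entirely inside the activity framework it has already built, so nothing new about truncation of Tutte polynomials is needed, but it must manage the ordering-dependent case analysis. Two points to tidy in a final write-up: handle $m=0$ separately, since $\binom{i-k-1}{m-1}$ degenerates there (the paper also splits off $m=0$), and state the commutation $\tau(Z)\setminus v=\tau(Z\setminus v)$ for non-coloops $v$ explicitly---your justification via cooloplessness of every intermediate truncation is correct, and the paper uses the same fact silently.
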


\section{Tutte polynomial coefficient characterization of $(k,l)$-uniform matroids\label{sec:2}}

For every unexplained term and notation related to matroids, we refer to Oxley's book \cite{MR2849819}. Let $M$ be a matroid on the set $\{1,\ldots,n\}$ which will be denoted by $[n]$ throughout the paper. For a basis $B$ of $M$ and an element $v\in B$, there is a unique cocircuit contained in $([n]-B)\cup\{v\}$. This cocircuit is called the \emph{fundamental cocircuit} of $v$ with respect to $B$ and will be denoted by $C^*(v,B)$. Dually, for $v\in [n]-B$, the unique circuit contained in $B\cup\{v\}$ is called the \emph{fundamental circuit} of $v$ with respect to $B$ and denoted by $C(v,B)$.

Given an order of the underlying set of a matroid $M$, an element $v\in B$ is \emph{internally active} with respect to $B$ if $v$ is the smallest element in the fundamental cocircuit $C^*(v,B)$. Dually, an element $v\in [n]-B$ is \emph{externally active} with respect to $B$ if $v$ is the smallest element in the fundamental circuit $C(v,B)$. The number of internally active elements of a basis $B$ is called \emph{internal activity} and will be denoted by $\inte(B)$. Similarly, \emph{external activity} of $B$ is denoted by $\exte(B)$. In this paper, we will simply say a basis $B$ is a \emph{$(i,j)$-basis} if $(\inte(B),\exte(B))=(i,j)$

The \emph{Tutte polynomial} $T_M(x,y)$ is a bivariate polynomial associated to a matroid defined by $\sum_{B}x^{\inte(B)}y^{\exte(B)}$ where $B$ runs over all bases of $M$. Although internal activity and external activity of a given basis depend on the order of the underlying set, the sum of all terms $x^{\inte(B)}y^{\exte(B)}$ is independent of the order of the underlying set. Hence, the Tutte polynomial is a polynomial invariant for matroids, where the coefficient $t^M_{i,j}$ of the monomial $x^iy^j$ corresponds to the count of bases of $M$ with internal activity $i$ and external activity $j$. It follows from the definition that the Tutte polynomial of a matoid $M$ satisfies the deletion-contraction recursion: $$
T_M(x,y) = \begin{cases}
    T_{M-v}(x,y) + T_{M/v}(x,y) & \text{if } v \text{ is not a coloop or a loop of } M \\
    xT_{M-v}(x,y) & \text{if } v \text{ is a coloop of } M \\
    yT_{M-v}(x,y) & \text{if } v \text{ is a loop of } M
\end{cases}.
$$ Therefore, if a matroid consists only of $j$ loops and $i$ coloops, then its Tutte polynomial is the monomial $x^iy^j$. We will frequently omit the superscipt $M$ from the notation $t^M_{i,j}$ if the specific matroid under reference is evident.

For a matroid $M$, each of the following conditions is equivalent to $M$ being a uniform matroid:

\begin{enumerate}
    \item     $M$ has no minor isomorphic to $U_{1,1}\oplus U_{0,1}$.
    \item Every hyperplane has nullity $0$, i.e., every flat of corank $1$ has nullity less than $1$.
    \item  If $T_M(x,y)=\sum t_{i,j}x^iy^j$ is the Tutte polynomial of $M$, then $t_{i,j}=0$ for all $(i,j)\ge (1,1)$.
\end{enumerate}
A slightly weaker form of each condition serves to define paving matroids:

\begin{enumerate}
    \item  $M$  has no minor isomorphic to $U_{2,2}\oplus U_{0,1}$.
    \item Every flat that is not a hyperplane is independent, i.e., every flat of corank $2$ has nullity less than $1$.
    \item  If $T_M(x,y)=\sum t_{i,j}x^iy^j$ is the Tutte polynomial of $M$, then $t_{i,j}=0$ for all $(i,j)\ge (2,1)$.
\end{enumerate}
Therefore, it is natural to ask if the following $(k,l)$-versions of each condition are equivalent:

\begin{enumerate}
    \item\label{item:1} $M$ has no minor isomorphic to $U_{k,k}\oplus U_{0,l}$.
    \item\label{item:2} Every corank $k$ flat has nullity less than $l$.
    \item\label{item:3}  If $T_M(x,y)=\sum t_{i,j}x^iy^j$ is the Tutte polynomial of $M$, then $t_{i,j}=0$ for all $(i,j)\ge (k,l)$.
\end{enumerate}

Recently, \citet{MR4290142} considered the class of \emph{$(k,l)$-uniform matroids} which is exactly defined by the statement \ref{item:1}: matroids with no $U_{k,k}\oplus U_{0,l}$ minor. The class of $(k,l)$-uniform matroids will be denoted by $\mathcal{U}(k,l)$. In \cite[Proposition~1.4]{MR4290142}, the equivalence of the statement \ref{item:1} and the statement \ref{item:2} is shown. Theorem \ref{uni} asserts that the statement \ref{item:3} on Tutte polynomial coefficients is also a necessary and sufficient condition for a matroid to be $(k, l)$-uniform.

The sufficiency of the condition \ref{item:3} is already proved by \citet[Proposition~6.10]{MR0309764} using induction. Another way to prove sufficiency is that if a matroid $M$ contains a minor isomorphic to $U_{k,k}\oplus U_{0,l}$, then the Tutte polynomial of $M$ contains a monomial $x^iy^j$ with $(i,j)\ge (k,l)$ by referring to the deletion-contraction recursion. Hence, it remains to prove the necessity. We will make use of the following statement \cite[Proposition~2.1.11]{MR2849819}.
\begin{lemma}\label{ccint}

If $C$ is a circuit and $C^*$ is a cocircuit of a matroid $M$, then $|C\cap C^*|\neq 1$.
\end{lemma}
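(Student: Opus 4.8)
The plan is to prove Lemma~\ref{ccint} directly from the fundamental cocircuit--circuit interaction, exploiting the uniqueness of fundamental cocircuits relative to a basis. The key observation is that a single-element intersection $C\cap C^*=\{v\}$ would force $v$ to lie in two incompatible ``closure'' relationships. First I would invoke orthogonality in the form that is most convenient: in any matroid, a circuit and a cocircuit cannot meet in exactly one element. To prove this from scratch, I would argue by contradiction and suppose $C\cap C^*=\{v\}$ for some circuit $C$ and cocircuit $C^*$.

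\begin{proof}
Suppose, for contradiction, that $C\cap C^*=\{v\}$ for a circuit $C$ and a cocircuit $C^*$ of $M$. Since $C^*$ is a cocircuit, its complement $[n]-C^*$ is a hyperplane, hence a flat of $M$. Because $C\cap C^*=\{v\}$, every element of $C$ other than $v$ lies in the hyperplane $H:=[n]-C^*$; that is, $C-\{v\}\subseteq H$.

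Now $C$ is a circuit, so $v\in\operatorname{cl}(C-\{v\})$. Since $H$ is a flat and $C-\{v\}\subseteq H$, we have $\operatorname{cl}(C-\{v\})\subseteq H$, and therefore $v\in H=[n]-C^*$. This contradicts the assumption $v\in C^*$, completing the proof.
\end{proof}

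The main obstacle is conceptual rather than computational: the whole argument hinges on recognizing that the complement of a cocircuit is a hyperplane (a flat of corank $1$), and that flats are closed under taking the closure operator. Once that is in place, the contradiction is immediate, since membership of $v$ in the circuit's closure collides with $v$ being separated from $C-\{v\}$ by the hyperplane. An alternative route, should one prefer to avoid the closure operator, would be to dualize: work with the cocircuit $C^*$ as a circuit of the dual matroid $M^*$ and appeal to the fact that a circuit and an independent set cannot meet in a way that violates the circuit exchange axiom; but the closure argument above is the cleanest, so that is the version I would write up.
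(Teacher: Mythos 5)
Your proof is correct, and it is essentially the standard argument: the paper itself does not prove this lemma but cites it as Proposition~2.1.11 of Oxley's book, whose proof is exactly the one you give (the complement of a cocircuit is a hyperplane $H$; if $C\cap C^*=\{v\}$ then $C-\{v\}\subseteq H$, whence $v\in\operatorname{cl}(C-\{v\})\subseteq H$, contradicting $v\in C^*$). All three ingredients you rely on --- cocircuit complements being hyperplanes, $v\in\operatorname{cl}(C-\{v\})$ for a circuit $C$, and flats absorbing closures of their subsets --- are standard and correctly applied, so nothing is missing.
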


\begin{proof}[Proof of Theorem \ref{uni}]

Let $M$ be a $(k,l)$-uniform matroid. Assume that there is a basis $B$ with $(\inte(B),\exte(B))\ge (k,l)$. Let $S$ be a subset of $B$ consisting of $k$ internally active elements of $B$. Then the closure of the set $B-S$ is a flat $F$ of corank $k$. By the second condition above,  $F$ has nullity less than $l$. This implies that $|F-(B-S)|<l$. Therefore, there is some element $v$ that is externally active with respect to $B$ and not contained in $F$. Then the intersection of the set $S$ and the fundamental circuit $C(v,B)$ is nonempty because $C(v,B)-\{v\}$ is not contained in $F$. Let $w$ be an element in the intersection $S\cap C(v,B)$.  Then the intersection of the fundamental cocircuit $C^*(w,B)$ and the fundamental circuit $C(v,B)$ is either $\{w\}$ or $\{v,w\}$. By Lemma \ref{ccint}, the intersection should be $\{v,w\}$. This leads to a contradiction since $v,w$ are the smallest elements in $C(v,B), C^*(w,B)$ respectively.

\end{proof}

By the maximality property of a matroid \cite[Theorem~1.3.1]{MR1989953}, there is a unique smallest basis with respect to the \emph{Gale order}, that is, there is a unique basis $\{a_1,<\cdots<a_d\}$ such that for every other basis $\{b_1<\cdots<b_d\}$, the inequality $a_i\le b_i$ holds for each $i$. It is clear from the definition that the smallest basis is a $(d,0)$-basis. Combining this fact with Theorem \ref{exc}, we get the following corollary.

\begin{corollary}\label{cor:max}

Every matroid $M$ on the set $[n]$ of rank $k$ is $(k,1)$-uniform and $(1,n-k)$-uniform. In particular, every matroid of rank $k$ or rank $k+m$ on the set $[k+l+m]$ is $(k,l)$-uniform.

\begin{proof}

If a matroid $M$ has rank $k$, then a basis $B$ has $k$ internally active element if and only if $B$ is the first basis of $M$. If $B'$ is a basis that is not the smallest, then there is an element $x\in B'-B$. By the basis exchange property of matroids, there is an element $y\in B-B'$ such that $B'\cup\{y\}\setminus\{x\}$ is a basis. Since $B\le B'$, it is clear that $y\le x$ and therefore $x$ is not internally active. Hence $M$ is $(k,1)$-uniform. $(1,n-k)$-uniformity is given by duality.
    
\end{proof}
    
\end{corollary}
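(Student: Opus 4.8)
The plan is to route everything through Theorem \ref{uni}, which converts each uniformity assertion into the vanishing of a family of Tutte coefficients: $M$ is $(k,l)$-uniform precisely when $t_{i,j}=0$ for all $(i,j)\ge(k,l)$. I would first establish the two ``extreme'' cases, namely $(k,1)$-uniformity and $(1,n-k)$-uniformity, and then deduce the ``in particular'' statement by observing that enlarging a pair $(k,l)$ shrinks the index region $\{(i,j):(i,j)\ge(k,l)\}$, so that the vanishing condition propagates in the right direction.

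First, for $(k,1)$-uniformity of a rank-$k$ matroid $M$, note that every basis has exactly $k$ elements, so $\inte(B)\le k$ for all bases $B$ and $t_{i,j}=0$ automatically whenever $i>k$; it remains only to show $t_{k,j}=0$ for $j\ge1$. The heart of the argument is to identify the bases of full internal activity $\inte(B)=k$ with the single Gale-minimal basis $B_0$. Indeed, if $B'\ne B_0$, choose $x\in B'-B_0$; the symmetric basis-exchange property provides $y\in B_0-B'$ with both $B'-x+y$ and $B_0-y+x$ bases, and minimality of $B_0$ in the Gale order forces $B_0-y+x\ge B_0$, hence $y<x$. Then $y$ lies in the fundamental cocircuit $C^*(x,B')$, so $x$ is not the smallest element of $C^*(x,B')$ and is therefore not internally active, giving $\inte(B')<k$. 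Since $B_0$ is a $(k,0)$-basis, the only surviving coefficient with $i=k$ is $t_{k,0}=1$, so $t_{k,j}=0$ for $j\ge1$, and Theorem \ref{uni} yields $(k,1)$-uniformity.

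For $(1,n-k)$-uniformity I would pass to the dual, using the standard identity $T_{M^*}(x,y)=T_M(y,x)$, equivalently $t^{M^*}_{i,j}=t^{M}_{j,i}$. Since $M^*$ has rank $n-k$, the previous paragraph applied to $M^*$ shows $M^*$ is $(n-k,1)$-uniform, i.e.\ $t^{M^*}_{i,j}=0$ for $(i,j)\ge(n-k,1)$; transposing indices gives $t^{M}_{i,j}=0$ for $(i,j)\ge(1,n-k)$, which is exactly $(1,n-k)$-uniformity. The final statement then follows by monotonicity: as $k,l$ are positive, $\{(i,j):(i,j)\ge(k,l)\}\subseteq\{(i,j):(i,j)\ge(k,1)\}$, so a rank-$k$ matroid on $[k+l+m]$, being $(k,1)$-uniform, is $(k,l)$-uniform; and a rank-$(k+m)$ matroid on $[k+l+m]$ has $n$ minus its rank equal to $l$, hence is $(1,l)$-uniform, and $\{(i,j)\ge(k,l)\}\subseteq\{(i,j)\ge(1,l)\}$ upgrades this to $(k,l)$-uniformity.

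The step I expect to be the main obstacle is the middle one, pinning down that full internal activity forces $B'=B_0$. The delicate point is producing the exchange partner $y$ strictly below $x$: one must invoke the symmetric exchange property to keep $B_0-y+x$ a basis, and then use minimality of $B_0$ in the Gale order to conclude $y<x$ rather than merely $y\le x$ (the inequality being strict because $x\notin B_0$ and $y\notin B'$). Getting $y\in C^*(x,B')$ to genuinely obstruct the internal activity of $x$ is what makes $t_{k,0}$ equal to $1$ and not just nonzero, and hence what secures the whole reduction.
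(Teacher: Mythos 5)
Your proposal is correct and follows essentially the same route as the paper's proof: identify the bases of full internal activity with the unique Gale-minimal basis $B_0$, dualize via $t^{M^*}_{i,j}=t^M_{j,i}$ for the $(1,n-k)$ half, and get the ``in particular'' clause from the inclusion of the index regions $\{(i,j)\ge(k,l)\}$ (the paper leaves this last monotonicity step implicit). The one substantive difference is at the exchange step, and there your version is genuinely tighter. The paper takes an arbitrary exchange partner $y\in B_0-B'$ with $B'\cup\{y\}\setminus\{x\}$ a basis and asserts ``since $B_0\le B'$, it is clear that $y\le x$''; for an arbitrary partner this is false. For instance, in the rank-$2$ matroid on $[4]$ in which $1$ and $2$ are parallel, $B_0=\{1,3\}$ is Gale-minimal, and for $B'=\{2,4\}$, $x=2$, the element $y=3>x$ is a legitimate exchange partner ($\{3,4\}$ is a basis), while $B_0-y+x=\{1,2\}$ is not a basis. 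Your appeal to the \emph{symmetric} exchange theorem repairs exactly this: demanding that $B_0-y+x$ also be a basis lets Gale-minimality force $y<x$, as you argue. So your write-up supplies the justification that the paper's proof is missing at its key step.

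One caveat applies equally to your proof and the paper's: the claim that $B_0$ is a $(k,0)$-basis (which you assert, and which the paper states just before the corollary) requires $M$ to be loopless, since a loop is externally active with respect to every basis, so $\exte(B_0)$ equals the number of loops. Indeed the corollary as literally stated fails in the presence of loops: $U_{1,1}\oplus U_{0,1}$ has rank $1$ on $[2]$ and Tutte polynomial $xy$, hence $t_{1,1}=1$ and it is not $(1,1)$-uniform; dually, coloops break the $(1,n-k)$ half. This is a defect of the statement rather than of your argument---in the paper's only application of the corollary (Theorem \ref{exc1}) the matroid $N$ is loopless and coloopless, where both proofs are sound---but to make your proof airtight you should either add the loopless hypothesis for the $(k,1)$ part (coloopless for its dual) or note explicitly where the loop count enters $\exte(B_0)$.
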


\section{Almost $(k,l)$-uniform matroids and excluded minors}\label{sec:3}

Recall that for a given class of matroid $\mathcal{M}$, a matroid $M$ is \emph{almost $\mathcal{M}$-matroid} if for all element $v$ of the underlying set of $M$, either $M-v$ or $M/v$ is in the class $\mathcal{M}$. The class of all almost $\mathcal{M}$ matroids will be denoted by $A\mathcal{M}$. If a matroid consists only of loops and coloops, then following Lemma \ref{allem} settles Theorem \ref{almuni} for this specific case.

\begin{lemma}\label{allem}

The matroid $M=U_{i,i}\oplus U_{0,j}$ consisting only coloops and loops is not an almost $(k,l)$-uniform matroid if and only if $(i,j)>(k,l)$.

\begin{proof}

Deletion and contraction of an element $v$ yields the same minor if $v$ is a loop or a coloop. Therefore, it is enough to consider the deletion $M-v$  for every element $v$. By symmetry, it reduces to considering two cases of $v$ being a coloop and $v$ being a loop. If a coloop is deleted from $M$, the obtained matroid is $U_{i-1,i-1}\oplus U_{0,j}$. This matroid has a $U_{k,k}\oplus U_{0,l}$-minor if and only if $(i-1,j)\ge(k,l)$. And if a loop is deleted from $M$, the obtained matroid is $U_{i,i}\oplus U_{0,j-1}$ which has a $U_{k,k}\oplus U_{0,l}$-minor if and only if $(i,j-1)\ge (k,l)$. Combining these two statements, $M$ is not almost $(k,l)$ uniform if and only if either the inequality $(i-1,j)\ge(k,l)$ or the inequality $(i,j-1)\ge (k,l)$ holds. Therefore $M$ is not almost $(k,l)$-uniform if and only if $(i,j)>(k,l)$.

\end{proof}
    
\end{lemma}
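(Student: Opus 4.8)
The plan is to cut the infinitely many tests demanded by the definition of almost $(k,l)$-uniformity down to two explicit minor computations. First I would note that every element $v$ of $M=U_{i,i}\oplus U_{0,j}$ is a loop or a coloop, and that for such an element deletion and contraction coincide, so $M-v=M/v$. Hence $M$ is almost $(k,l)$-uniform if and only if $M-v$ is $(k,l)$-uniform for \emph{every} $v$. Because the coloops of $M$ play symmetric roles, as do its loops, and $(k,l)$-uniformity is an isomorphism invariant, it is enough to treat two representative deletions: deleting a coloop produces $U_{i-1,i-1}\oplus U_{0,j}$, and deleting a loop produces $U_{i,i}\oplus U_{0,j-1}$.

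The second step is a uniformity test for a matroid built only from coloops and loops. Such a matroid $U_{a,a}\oplus U_{0,b}$ has Tutte polynomial equal to the single monomial $x^ay^b$, so its only nonzero coefficient is $t_{a,b}=1$; by Theorem~\ref{uni} it is therefore $(k,l)$-uniform exactly when no nonzero coefficient sits at a position $\geq(k,l)$, i.e.\ exactly when $(a,b)\not\geq(k,l)$. (Equivalently one may argue directly that every minor of a coloop-and-loop matroid again has the form $U_{a',a'}\oplus U_{0,b'}$ with $a'\leq a$ and $b'\leq b$, so a $U_{k,k}\oplus U_{0,l}$ minor exists iff $a\geq k$ and $b\geq l$.) Applied to the two representatives, this shows that $U_{i-1,i-1}\oplus U_{0,j}$ is $(k,l)$-uniform iff $(i-1,j)\not\geq(k,l)$ and that $U_{i,i}\oplus U_{0,j-1}$ is $(k,l)$-uniform iff $(i,j-1)\not\geq(k,l)$.

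Combining the two steps, $M$ is almost $(k,l)$-uniform iff both representatives are $(k,l)$-uniform, so $M$ \emph{fails} to be almost $(k,l)$-uniform iff $(i-1,j)\geq(k,l)$ or $(i,j-1)\geq(k,l)$. It then remains to simplify this disjunction. Spelling the two alternatives out as ``$i\geq k+1$ and $j\geq l$'' and ``$i\geq k$ and $j\geq l+1$,'' one checks that their union is exactly the set of pairs with $(i,j)\geq(k,l)$ and $(i,j)\neq(k,l)$, which is the condition $(i,j)>(k,l)$. The degenerate possibilities $i=0$ or $j=0$ require no separate handling: since $k$ and $l$ are positive, neither $(i-1,j)\geq(k,l)$ nor $(i,j-1)\geq(k,l)$ can hold in those cases, in agreement with $(i,j)\not>(k,l)$.

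The only place I expect to require genuine care is this final simplification. One must track the $\pm1$ shifts carefully and identify the resulting region---everything $\geq(k,l)$ with the corner $(k,l)$ deleted---with the set written $(i,j)>(k,l)$; this is the product-order reading in which $(i,j)>(k,l)$ means $(i,j)\geq(k,l)$ together with $(i,j)\neq(k,l)$, and it is crucial here, since the union is strictly larger than the set of pairs exceeding $(k,l)$ in both coordinates. Everything preceding it is a mechanical unwinding of the definitions together with the monomial form of the Tutte polynomial of a coloop-and-loop matroid.
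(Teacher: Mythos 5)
Your proof is correct and takes essentially the same route as the paper's: reduce to the two representative deletions (since deletion and contraction coincide on loops and coloops), test each of $U_{i-1,i-1}\oplus U_{0,j}$ and $U_{i,i}\oplus U_{0,j-1}$ for a $U_{k,k}\oplus U_{0,l}$ minor, and identify the union of the two shifted regions with the condition $(i,j)>(k,l)$. The two points you spell out that the paper leaves implicit---the justification of the uniformity test (via Theorem~\ref{uni} or direct inspection of minors of loop-and-coloop matroids) and the observation that $>$ must here be read in the product order ($\geq$ together with $\neq$), which is indeed the reading forced by consistency with Theorem~\ref{almuni} and Corollary~\ref{exc}---are both accurate and worth making explicit.
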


The minor-closedness of almost $(k,l)$-uniform matroids is needed in the proof of Theorem \ref{exc1}.

\begin{lemma}\label{minorcl}

The class $A\mathcal{U}(k,l)$ of almost $(k,l)$-uniform matroids is minor-closed.

\begin{proof}

Since the class of $(k,l)$-uniform matroids are defined by an excluded minor, it is minor-closed. Therefore, the class of almost $(k,l)$-uniform matroids is also minor-closed by \cite[Theorem 3.2]{lara2020exploring}.
    
\end{proof}
    
\end{lemma}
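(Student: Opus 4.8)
The plan is to give a direct, self-contained argument rather than invoking the general principle of \cite{lara2020exploring}. Since every minor of $M$ is obtained by a sequence of single-element deletions and contractions, and since these operations on distinct elements commute, it suffices to show that $A\mathcal{U}(k,l)$ is closed under deleting one element and under contracting one element. Throughout I would use two inputs: first, that $\mathcal{U}(k,l)$ itself is minor-closed, which is immediate because it is defined by the single excluded minor $U_{k,k}\oplus U_{0,l}$; and second, the standard commutation relations for distinct elements $e\neq v$, namely $(M-v)-e=(M-e)-v$, $(M/v)-e=(M-e)/v$, $(M-v)/e=(M/e)-v$, and $(M/v)/e=(M/e)/v$ (see \cite{MR2849819}).

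For deletion, I would fix $M\in A\mathcal{U}(k,l)$ and an element $e$, and verify the almost-uniform condition for $M-e$ at an arbitrary element $v\neq e$. Because $M$ is almost $(k,l)$-uniform, either $M-v\in\mathcal{U}(k,l)$ or $M/v\in\mathcal{U}(k,l)$. In the first case, minor-closedness of $\mathcal{U}(k,l)$ gives $(M-v)-e\in\mathcal{U}(k,l)$, and the commutation relation rewrites this as $(M-e)-v\in\mathcal{U}(k,l)$. In the second case, $(M/v)-e\in\mathcal{U}(k,l)$, which equals $(M-e)/v$. Either way one of $(M-e)-v$ and $(M-e)/v$ lies in $\mathcal{U}(k,l)$, so $M-e$ is almost $(k,l)$-uniform.

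The contraction case is entirely parallel. Fixing $M\in A\mathcal{U}(k,l)$ and an element $e$, I would check the condition for $M/e$ at an arbitrary $v\neq e$. If $M-v\in\mathcal{U}(k,l)$, then $(M-v)/e=(M/e)-v\in\mathcal{U}(k,l)$; if $M/v\in\mathcal{U}(k,l)$, then $(M/v)/e=(M/e)/v\in\mathcal{U}(k,l)$. In both cases $M/e$ is almost $(k,l)$-uniform, and iterating these two closure properties handles an arbitrary minor.

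Honestly, there is no substantial obstacle here: the whole argument is driven by the fact that the witness minor ($M-v$ or $M/v$) certifying the almost-condition at $v$ survives the extra deletion or contraction at $e$ precisely because $\mathcal{U}(k,l)$ is minor-closed and the two minor operations commute. The only point requiring any care is the bookkeeping of which commutation identity to apply in each of the four sub-cases, together with confirming that the universal quantifier over $v$ in the definition ranges over exactly $E(M)\setminus\{e\}$ for both $M-e$ and $M/e$. Notably, the argument uses nothing special about $(k,l)$-uniformity beyond minor-closedness, so it reproves the general statement that $A\mathcal{M}$ is minor-closed whenever $\mathcal{M}$ is, recovering \cite[Theorem~3.2]{lara2020exploring}.
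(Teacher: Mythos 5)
Your proof is correct, but it takes a different route from the paper: the paper disposes of this lemma in one line by citing the general result \cite[Theorem~3.2]{lara2020exploring} that $A\mathcal{M}$ is minor-closed whenever $\mathcal{M}$ is, together with the observation that $\mathcal{U}(k,l)$ is minor-closed since it is defined by the single excluded minor $U_{k,k}\oplus U_{0,l}$. You instead give a self-contained proof of that general principle: reduce to single-element deletions and contractions, and in each of the four sub-cases transport the witness minor ($M-v$ or $M/v$) past the extra operation at $e$ using the commutation identities $(M-v)-e=(M-e)-v$, $(M/v)-e=(M-e)/v$, $(M-v)/e=(M/e)-v$, $(M/v)/e=(M/e)/v$, invoking minor-closedness of $\mathcal{U}(k,l)$ to keep the witness in the class. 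The case analysis is complete and the bookkeeping (including the observation that the quantifier for $M-e$ and $M/e$ ranges over $E(M)\setminus\{e\}$) is handled correctly, so there is no gap. What the paper's approach buys is brevity and deference to the literature; what yours buys is self-containedness and, as you note, an actual reproof of the cited theorem in full generality, since nothing in your argument uses $(k,l)$-uniformity beyond minor-closedness. In effect you have written out the proof that the paper outsources.
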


Next lemma states that a $(i,j)$-basis becomes a $(i,j')$-basis for $j'\ge j$ after contracting an element that is not internally active with respect to the basis.

\begin{lemma}\label{lem:act}

Let $B$ be a $(i,j)$-basis of a matroid $M$ on the set $[n]$. If $v\in B$ is not an internally active element with respect to $B$, then the equality $\inte_M(B)=\inte_{M/\{v\}}(B-\{v\})$ and the inequality $\exte_M(B)\le\exte_{M/\{v\}}(B-\{v\})$ hold.

Dually, if $v\in [n]-B$ is not an externally active element with respect to $B$, then the inequality $\inte_M(B)\le\inte_{M-\{v\}}(B)$ and the equality $\exte_M(B)=\exte_{M-\{v\}}(B)$ hold.

\begin{proof}

Suppose that $v\in B$ is not internally active. It is a well-known fact \cite[Proposition 3.1.17]{MR2849819} that the collection of cocircuits of $M/v$ is exactly cocircuits of $M$ not containing $v$. For each $w\in B-\{v\}$, the fundamental cocircuit $C^*(w,B)$ does not contain $v$ and therefore remains the fundamental cocircuit $C^*(w,B-\{v\})$ in $M/v$. This implies that every element $w\in B-\{v\}$ is internally active with respect to $B-\{v\}$ in $M/v$ if and only if $w$ is internally active with respect to $B$ in $M$. This proves the equality $\inte_M(B)=\inte_{M/v}(B-\{v\})$.

Now, choose an element $w\in[n]-B$. Circuits of $M/v$ are minimal subsets among the collection $\{C\in \mathcal{C}(M)|C-\{v\}\}$ where $\mathcal{C}(M)$ is the collection of circuits of $M$, see \cite[Propostion 3.1.10]{MR2849819}. Thus, if the fundamental circuit $C(w,B)$ contains $v$, then $C(w,B)-\{v\}$ is the fundamental circuit of $w$ with respect to $B-\{v\}$ in $M/v$.

Assume that $v$ is not contained in $C(w,B)$. If there is a circuit $C'$ of $M$ such that $C'\setminus C(w,B)=\{v\}$, then both $C'$ and $C(w,B)$ are circuits of $M$ contained in $B\cup \{w\}$ which is a contradiction. Therefore, $C(w,B)$ is minimal among the collection $\{C\in \mathcal{C}(M)|C-\{v\}\}$ which implies that $C(w,B)$ is a circuit of $M/v$.

In both cases, an externally active element $w$ will remain externally active with respect to $B-v$ in the matroid $M/v$, thus proving the inequality $\exte_M(B)\le\exte_{M/v}(B-\{v\})$.

And the dual statement follows by considering the dual matroid $M^*$ and the dual basis $[n]-B$.
    
\end{proof}
    
\end{lemma}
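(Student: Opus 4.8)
The plan is to track each fundamental circuit and cocircuit of $B$ through the contraction, compare it with the corresponding fundamental object of $B\setminus\{v\}$ in $M/v$, and then recover the one-sided inequality for external activity from the ground-set ordering. Two standard facts drive everything, both already invoked in the surrounding text: the cocircuits of $M/v$ are exactly the cocircuits of $M$ avoiding $v$, and the circuits of $M/v$ are the minimal nonempty members of $\{C\setminus\{v\}:C\in\mathcal C(M)\}$.

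First I would settle the internal activity. For any $w\in B\setminus\{v\}$ the fundamental cocircuit $C^*(w,B)$ lies in $([n]\setminus B)\cup\{w\}$, so it cannot contain the element $v\in B\setminus\{w\}$; hence it survives as a cocircuit of $M/v$, and by uniqueness it is precisely $C^*(w,B\setminus\{v\})$. Consequently $w$ is internally active for $B\setminus\{v\}$ in $M/v$ exactly when it is internally active for $B$ in $M$, so the two counts agree on $B\setminus\{v\}$. The hypothesis that $v$ itself is not internally active is exactly what guarantees $v$ contributes nothing to $\inte_M(B)$, yielding the equality $\inte_M(B)=\inte_{M/v}(B\setminus\{v\})$.

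Next I would treat external activity by a case split on whether $v\in C(w,B)$ for $w\in[n]\setminus B$. If $v\in C(w,B)$, I would show $C(w,B)\setminus\{v\}$ is a circuit of $M/v$: its nullity in $M/v$ is $1$ by the rank formula $r_{M/v}(X)=r_M(X\cup\{v\})-1$ (valid since $v$, lying in a basis, is not a loop), and minimality follows because any proper dependent subset would, after adjoining $v$, give a proper dependent subset of $C(w,B)$, contradicting minimality of the circuit $C(w,B)$. If $v\notin C(w,B)$, then $C(w,B)$ itself is the candidate, and I would rule out a strictly smaller member $C'\setminus\{v\}$ of the collection: such a $C'$ must contain $v$, hence $C'\subseteq C(w,B)\cup\{v\}\subseteq B\cup\{w\}$ forces $C'=C(w,B)$, contradicting $v\in C'$. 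In either case the fundamental circuit of $B\setminus\{v\}$ in $M/v$ is $C(w,B)$ or $C(w,B)\setminus\{v\}$. Now if $w$ is externally active in $M$ it is the least element of $C(w,B)$, so it is smaller than $v$ whenever $v$ lies in that circuit; deleting $v$ therefore leaves $w$ least, and $w$ stays externally active in $M/v$. Since $[n]\setminus B$ indexes the candidates in both matroids, the externally active set can only grow, giving $\exte_M(B)\le\exte_{M/v}(B\setminus\{v\})$; the inequality is genuinely one-sided because a previously inactive $w$ with $\min C(w,B)=v<w$ may become active once $v$ is removed.

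Finally, the dual statement requires no new work: applying the proved half to $M^*$ with the dual basis $[n]\setminus B$ and the element $v\in[n]\setminus B$ (which is internally inactive in $M^*$ precisely when it is externally inactive in $M$), and using $M^*/v=(M\setminus v)^*$ together with the activity swap $\inte_{M^*}([n]\setminus B)=\exte_M(B)$ and $\exte_{M^*}([n]\setminus B)=\inte_M(B)$, converts the equality and inequality above into the desired $\exte_M(B)=\exte_{M\setminus v}(B)$ and $\inte_M(B)\le\inte_{M\setminus v}(B)$. I expect the main obstacle to be the circuit bookkeeping in the external-activity case — specifically verifying that the reduced set is genuinely the fundamental circuit of $M/v$ (minimality within $\{C\setminus\{v\}\}$) — since the internal-activity half is immediate from the cocircuit structure and the dual half is purely formal.
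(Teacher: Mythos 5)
Your proposal is correct and follows essentially the same route as the paper: the internal-activity equality via the fact that cocircuits of $M/v$ are the cocircuits of $M$ avoiding $v$, the external-activity inequality via the same case split on whether $v\in C(w,B)$ (with the same minimality argument ruling out a smaller circuit $C'$ inside $B\cup\{w\}$), and the deletion half by dualizing. Your additions---the rank-formula verification that $C(w,B)\setminus\{v\}$ is a circuit of $M/v$, and the explicit remark that the hypothesis on $v$ ensures it contributes nothing to $\inte_M(B)$---merely spell out steps the paper leaves implicit.
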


The inequality $\exte_M(B)\le\exte_{M/\{v\}}(B-\{v\})$ can be strict. For example, if $M$ is the uniform matroid $U_{1,3}$ on the set $[3]$ and $B=\{2\}$, then $B$ is a $(0,1)$-basis since it has no internally active element and one externally active element $1$. Contracting $v=2$, the resulting matroid $M/v$ is $U_{0,2}$ and $B-\{v\}=\emptyset$ which is the unique basis of $M/v$. Then $B-\{v\}$ is a $(0,2)$-basis since every loop is an externally active element of every basis. Now, we are ready to prove Theorem \ref{almuni}.
 
\begin{proof}[Proof of  Theorem \ref{almuni}]

Suppose that $M$ satisfies $t_{k,l}=1$ and $t_{i,j}=0$ for $(i,j)>(k,l)$. For an element $v$ of $M$, the Tutte polynomial $T_M(x,y)$ of $M$ is equal to one of $xT_{M-v}$, $yT_{M-v}$ or $T_{M-v}+T_{M/v}$ by the deletion-contraction recursion. For each of these three cases, either $M-v$ or $M/v$ has vanishing Tutte polynomial coefficients $t_{i,j}$ for all $(i,j)\ge (k,l)$. This implies that $M$ is almost $(k,l)$-uniform by Theorem \ref{exc}.

To show the converse statement, first suppose that $t_{i,j}>0$ for some $(i,j)>(k,l)$. Then there is a basis $B$ satisfying $(\inte(B), \exte(B))>(k,l)$. This implies that $M$ is not $(\inte(B), \exte(B))$-uniform and has a minor isomorphic to $U_{\inte(B),\inte(B)}\oplus U_{0,\exte(B)}$ which is not almost $(k,l)$-uniform by Lemma \ref{allem}. Since almost $(k,l)$-uniform matroids are minor-closed by Lemma \ref{minorcl}, $M$ is not almost $(k,l)$-uniform. Now, suppose $t_{k,l}>1$. Then there are two distinct bases $B_1, B_2$ with internal activity $k$ and external activity $l$. Let $v$ be the maximal element of the symmetric difference $B_1\symdif B_2$ of two bases. Without loss of generality, we may assume $v\in B_1$.  Then $C(v,B_2)\cap (B_2-B_1)$ is nonempty because otherwise $C(v,B_2)$ is contained in $B_1$. This implies that $v$ is not externally active with respect to $B_2$ since $v$ is the maximal element of $B_1\symdif B_2$. Similarly, the intersection $C^*(v,B_1)\cap (B_2-B_1)$ is nonempty because otherwise the cocircuit $C^*(v,B_1)$ is contained in the cobasis $[n]-B_2$ which is a contradiction. Therefore, $v$ is not an internally active element of $B_1$. Hence $B_1-v$ remains a $(k,l')$-basis for $l'\ge l$ after contracting $v$ and $B_2$ remains a $(k',l)$-basis for $k'\ge k$ after deleting $v$ by Lemma \ref{lem:act}. Thus, both $M/v$ and $M-v$ are not $(k,l)$-uniform thereby showing $M$ is not almost $(k,l)$-uniform. Hence, it is necessary to satisfy $t_{k,l}=1$ and $t_{i,j}=0$ for $(i,j)>(k,l)$ if $M$ is almost $(k,l)$-uniform.
     
\end{proof}

An abundant source of examples of almost $(k,l)$-uniform matroids is the class of Schubert matroids. Schubert matroids, also known as nested matroids, shifted matroids, or generalized Catalan matroids, is a matroid of rank $d$ associated with a $d$-element subset $I=\{a_1<a_2,...,<a_d\}$ of the $[n]$. Bases of Schubert matroid associated to $I$ is all $d$-element subsets $\{b_1<b_2<\cdots<b_d\}$ that is larger than $I$ with respect to the Gale order, i.e., $a_i<b_i$ for all $i=1,...,d$.

\begin{figure}
\centering
\begin{tikzpicture}[scale=1]
  \draw[help lines] (0,0) grid (5,2);
  \draw[help lines] (2,2) grid (5,5);
  \draw[line width=2pt] (0,0) -- (1,0) -- (2,0) -- (3,0) -- (4,0) -- (5,0) -- (5,1) -- (5,2) -- (5,3) -- (5,4) -- (5,5);
  \draw[line width=2pt] (0,0) -- (0,1) -- (0,2) -- (1,2) -- (1,3) -- (2,3) -- (2,4) -- (2,5) -- (3,5) -- (4,5) -- (5,5);
  \draw[line width=2pt, color=red] (0,0) -- (1,0) -- (1,2) -- (1,2) -- (1,3) -- (2,3) -- (2,4) -- (2,5) -- (3,5) -- (4,5) -- (5,5);
  \node at (0.5,-0.2) {$y$};
  \node at (0.8,2.5) {$x$};
  \node at (1.8,3.5) {$x$};
  \node at (1.8,4.5) {$x$};
\end{tikzpicture}
\begin{tikzpicture}[scale=1]
  \draw[help lines] (0,0) grid (5,2);
  \draw[help lines] (2,2) grid (5,5);
  \draw[line width=2pt] (0,0) -- (1,0) -- (2,0) -- (3,0) -- (4,0) -- (5,0) -- (5,1) -- (5,2) -- (5,3) -- (5,4) -- (5,5);
  \draw[line width=2pt] (0,0) -- (0,1) -- (0,2) -- (1,2) -- (1,3) -- (2,3) -- (2,4) -- (2,5) -- (3,5) -- (4,5) -- (5,5);
  \draw[line width=2pt, color=blue] (0,0) -- (1,0) -- (2,0) -- (2,1) -- (2,2) -- (2,3) -- (2,4) -- (2,5) -- (3,5) -- (4,5) -- (5,5);
  \node at (0.5,-0.2) {$y$};
  \node at (1.5,-0.2) {$y$};
  \node at (1.8,3.5) {$x$};
  \node at (1.8,4.5) {$x$};
\end{tikzpicture}
\caption{The path corresponding to the $(3,1)$-basis and the $(2,2)$-basis}
\label{fig:sch}
\end{figure}
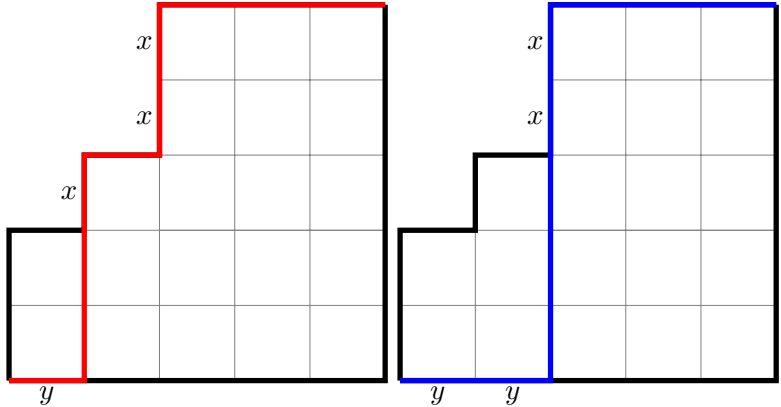

\begin{example}

The bases of a Schubert matroid are visualized by a lattice path bounded by two lattice paths. For example, suppose that $M$ is a Schubert matroid of rank $5$ on $[10]$ associated to $\{1,2,4,6,7\}$. Every lattice path from $(0,0)$ to $(5,5)$ with only increasing $x$ and $y$ coordinates, i.e., consisting only of east steps (E) and north steps (N), between the paths $P=E^5N^5$ and $Q=NNENENNEEE$, corresponds to a basis of the Schubert matroid $M$. For each lattice path the position of the north steps in the path determines the corresponding basis. In Figure \ref{fig:sch}, the red path corresponds to the basis $\{2,3,4,6,7\}$ and the blue path corresponds to the basis $\{3,4,5,6,7\}$. 

Bonin \cite[\S5]{MR2018421} discovered that the external activity of a lattice path, which corresponds to a basis, is the number of east steps shared with $P$ and the internal activity is the number of north steps shared with $Q$. The figure \ref{fig:sch} illustrates lattice paths corresponding to a $(3,1)$-basis and a $(2,2)$-basis. The figure clearly indicates that there is no $(i,j)$-basis with $(i,j)$ greater than $(3,1)$ or $(2,2)$ and no other $(3,1)$ or $(2,2)$-bases. Hence, $M$ is both almost $(3,1)$-uniform and almost $(2,2)$-uniform. In general, for a collection of $(k_1,l_1),...,(k_c,l_c)$ pairwise incomparable pair of positive integers, there is a Schubert matroid that is simultaneously almost $(k_1,l_1),...,(k_c,l_c)$-uniform.

\end{example}

A direct consequence of Theorem \ref{almuni} is Corollary \ref{exc} characterizing excluded minors of almost $(k,l)$-uniform matroids.

\begin{proof}[Proof of  Corollary \ref{exc}]

Lemma \ref{allem} implies that if every element of $M$ is a loop or a coloop, then it is an excluded minor of the class of almost $(k,l)$-uniform matroids if and only if $M\cong U_{k+1,k+1}\oplus U_{0,l}$ or $M\cong U_{k,k}\oplus U_{0,l+1}$. Now, suppose that $M$ is an excluded minor of the class of almost $(k,l)$-uniform matroids with an element $v$ that is neither a loop nor a coloop. Since $M-v$ and $M/v$ are both almost $(k,l)$-uniform, the deletion-contraction recursion $T_M(x,y)=T_{M-v}(x,y)+T_{M/v}(x,y)$ implies that $t_{k,l}^M\le 2$ and $t^M_{i,j}=0$ if $(i,j)>(k,l)$ by Theorem \ref{almuni}. Since $M$ is not an almost $(k,l)$-uniform matroid, $t_{k,l}^M=2$ and $t^{M-v}_{k,l}=1, t^{M/v}_{k,l}=1$. The converse direction is straightforward.
    
\end{proof}

Next result proposes a relation between excluded minors of almost $(k,l)$-uniform matroids and excluded minors of almost $(k+1,l)$-uniform matroids.

\begin{corollary}\label{cor:1}

Following two statements are equivalent.

\begin{enumerate}
    \item $N$ is an excluded minor of almost $(k,l)$-uniform matroids.
    \item $N\oplus U_{1,1}$ is an excluded minor of almost $(k+1,l)$-uniform matroids.
\end{enumerate}

\begin{proof}

It follows from the deletion-contraction recursion of the Tutte polynomial and Corollary \ref{exc}.
    
\end{proof}
    
\end{corollary}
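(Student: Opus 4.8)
The plan is to leverage the single observation that adjoining a coloop $U_{1,1}$ multiplies the Tutte polynomial by $x$, and then to run the criterion of Corollary \ref{exc} in both directions. Since $U_{1,1}$ is a coloop, the deletion-contraction recursion gives
$$T_{N\oplus U_{1,1}}(x,y)=x\,T_N(x,y),$$
so the coefficients are related by $t^{N\oplus U_{1,1}}_{i,j}=t^{N}_{i-1,j}$. In particular $t^{N\oplus U_{1,1}}_{k+1,l}=t^{N}_{k,l}$, and a strict inequality $(i,j)>(k,l)$ for $N$ holds exactly when $(i+1,j)>(k+1,l)$ holds for $N\oplus U_{1,1}$. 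I also record two structural facts to be used repeatedly: the elements of $N\oplus U_{1,1}$ that are neither loops nor coloops are precisely those of $N$ (the adjoined element is a coloop), and deletion commutes with the direct summand, i.e.\ $(N\oplus U_{1,1})-v=(N-v)\oplus U_{1,1}$ for every $v$ in the ground set of $N$.

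For the forward direction I would split according to which clause of Corollary \ref{exc} certifies that $N$ is an excluded minor of $A\mathcal{U}(k,l)$. If $N\cong U_{k+1,k+1}\oplus U_{0,l}$ then $N\oplus U_{1,1}\cong U_{k+2,k+2}\oplus U_{0,l}$, which is the first degenerate excluded minor for $A\mathcal{U}(k+1,l)$; if $N\cong U_{k,k}\oplus U_{0,l+1}$ then $N\oplus U_{1,1}\cong U_{k+1,k+1}\oplus U_{0,l+1}$, the second degenerate case. In the remaining case I verify the three bullet conditions of Corollary \ref{exc} for $N\oplus U_{1,1}$ at $(k+1,l)$: a non-loop, non-coloop element of $N$ survives the direct sum; the coefficient translation gives $t^{N\oplus U_{1,1}}_{k+1,l}=t^N_{k,l}=2$ with vanishing above $(k+1,l)$; and for each such $v$ the deletion identity yields $t^{(N\oplus U_{1,1})-v}_{k+1,l}=t^{(N-v)\oplus U_{1,1}}_{k+1,l}=t^{N-v}_{k,l}=1$.

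The converse is the same computation read backwards. Given that $N\oplus U_{1,1}$ is an excluded minor of $A\mathcal{U}(k+1,l)$, I again split by the clauses of Corollary \ref{exc}; the two degenerate cases force $N\cong U_{k+1,k+1}\oplus U_{0,l}$ and $N\cong U_{k,k}\oplus U_{0,l+1}$ respectively, by cancelling one coloop from the decomposition into loops and coloops. In the generic case the coefficient and deletion identities above recover $t^N_{k,l}=2$, the vanishing of $t^N_{i,j}$ for $(i,j)>(k,l)$, and $t^{N-v}_{k,l}=1$ for each element $v$ that is neither a loop nor a coloop, whence $N$ is an excluded minor of $A\mathcal{U}(k,l)$.

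The only genuine subtlety—and the step I would treat most carefully—is the bookkeeping around loops and coloops when passing between $N$ and $N\oplus U_{1,1}$: one must confirm that adjoining the coloop neither creates nor destroys any non-loop, non-coloop element, that the degenerate clauses of Corollary \ref{exc} match up under adding or removing exactly one coloop, and that the index shift $t^{N\oplus U_{1,1}}_{i,j}=t^N_{i-1,j}$ transports the strict inequality $(i,j)>(k,l)$ correctly in both directions. Everything else is a direct substitution into the criterion of Corollary \ref{exc}.
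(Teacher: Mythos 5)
Your proof is correct and follows exactly the route the paper intends: the paper's own (one-line) proof invokes precisely the deletion--contraction recursion, i.e.\ the identity $T_{N\oplus U_{1,1}}=x\,T_N$ and its coefficient shift, together with the case analysis of Corollary \ref{exc}. Your write-up simply makes explicit the bookkeeping (degenerate loop/coloop cases, preservation of non-loop non-coloop elements, and $(N\oplus U_{1,1})-v=(N-v)\oplus U_{1,1}$) that the paper leaves to the reader.
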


The dual statement of Corollary \ref{cor:1} implies that $N$ is an excluded minor of almost $(k,l)$-uniform matroids if and only if $N\oplus U_{0,1}$ is an excluded minor of almost $(k,l+1)$-uniform matroids.

\section{A construction of some excluded minors of almost $(k,l)$-uniform matroids}\label{sec:cons}

As stated in the introduction, focusing on loopless and coloopless excluded minors of almost $(k,l)$-uniform matroids suffices, as demonstrated by Corollary \ref{cor:1}. In this section, we find some loopless and coloopless excluded minors of the class of almost $(k,l)$-uniform matroids extending previous results on excluded minors of almost uniform matroids and almost paving matroids by \citet{vega2020excluded}.

Let $\tau(M)$ denote the truncation of the matroid $M$ as it appeared in Theorem \ref{exc1}. Now, we construct infinitely many excluded minors of almost $(k,l)$-uniform matroids by proving Theorem \ref{exc1}. Following two preliminary facts about uniformity of truncation are required beforehand.

\begin{lemma}\label{lem:trun}

If $M$ is $(k,l)$-uniform, then $\tau(M)$ is $(k,l)$-uniform.

\begin{proof}

If $M$ is $(k,l)$-uniform, then it is $(k+1,l)$ uniform. Hence, it is enough to prove that if a matroid $M$ is $(k,l)$-uniform for some $k>1$, then its truncation $\tau(M)$ is $(k-1,l)$-uniform. Suppose that $M$ is a $(k,l)$-uniform matroid on $[n]$. By the second condition of $(k,l)$-uniform matroids, this is equivalent to that every corank $k$ flat of $M$ has nullity less than $l$. On the other hand, it is clear that a subset of $[n]$ is a flat of $\tau(M)$ if and only if it is a flat of $M$ that is not a hyperplane. Thus, corank $k$ flats of $M$ are exactly corank $k-1$ flats of $\tau(M)$. Combining these two facts, every corank $k-1$ flat of $\tau(M)$ has nullity less than $l$. This implies that $\tau(M)$ is $(k-1,l)$-uniform.
     
\end{proof}
    
\end{lemma}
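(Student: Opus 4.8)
The plan is to avoid reasoning about excluded minors directly and instead work through the flat–nullity characterization recorded as condition \ref{item:2}: a matroid is $(k,l)$-uniform exactly when every corank $k$ flat has nullity less than $l$. This recasts the lemma as a question about how the corank and nullity of flats transform under truncation, which is concrete and checkable. The first move I would make is a reduction in the corank parameter. Observe that $(k,l)$-uniformity implies $(k+1,l)$-uniformity, since a $U_{k+1,k+1}\oplus U_{0,l}$ minor would, after deleting a single coloop, furnish a $U_{k,k}\oplus U_{0,l}$ minor. It therefore suffices to prove the sharper claim that whenever $k>1$ and $M$ is $(k,l)$-uniform, the truncation $\tau(M)$ is $(k-1,l)$-uniform: applying this to the already-$(k+1,l)$-uniform matroid $M$ then delivers $(k,l)$-uniformity of $\tau(M)$.

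For the sharper claim I would use the standard description of the flats of a truncation. Writing $d=r(M)$, the closure operator of $\tau(M)$ agrees with that of $M$ on every set of $M$-rank at most $d-2$ and collapses everything of larger rank to the whole ground set; equivalently, a set is a flat of $\tau(M)$ if and only if it is a flat of $M$ that is not a hyperplane. The key computation is then: if $F$ is a flat of $M$ of corank $k\ge 2$, then $r_M(F)=d-k\le d-2$, so $F$ is again a flat of $\tau(M)$ with the same underlying set and the same rank $d-k$, hence corank $(d-1)-(d-k)=k-1$ in $\tau(M)$. Since both $F$ and its rank are unchanged, its nullity $|F|-r(F)$ is identical in $M$ and in $\tau(M)$. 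Reading this correspondence in both directions identifies the corank-$(k-1)$ flats of $\tau(M)$ with the corank-$k$ flats of $M$, matching nullities exactly.

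Assembling these, every corank-$(k-1)$ flat of $\tau(M)$ shares its nullity with the corresponding corank-$k$ flat of $M$, which is less than $l$ by the $(k,l)$-uniformity of $M$; so $\tau(M)$ is $(k-1,l)$-uniform, completing the sharper claim. The one delicate point — and the reason the reduction to $k>1$ is indispensable — is that the corank-$1$ flats of $M$, namely its hyperplanes, are precisely the flats destroyed by truncation, so no corank correspondence is available at corank $1$. The passage from $(k,l)$- to $(k+1,l)$-uniformity sidesteps this by ensuring $k\ge 2$ before the flat correspondence is invoked. I expect this corank-$1$ boundary behaviour, rather than the rank/nullity bookkeeping (which is routine for the surviving flats), to be the only subtlety requiring care.
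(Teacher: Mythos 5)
Your proof is correct and takes essentially the same route as the paper's: the same reduction from $(k,l)$- to $(k+1,l)$-uniformity followed by the sharper claim for $k>1$, invoking the flat--nullity characterization (condition \ref{item:2}) and the correspondence between corank-$k$ flats of $M$ and corank-$(k-1)$ flats of $\tau(M)$ with nullities preserved. Your additional details (the coloop-deletion justification of the $(k+1,l)$ step, the explicit rank bookkeeping, and the remark on why corank $1$ forces the reduction) merely spell out what the paper leaves implicit.
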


\begin{lemma}\label{lem:mid}

For a positive integer $m$, let $M$ be a matroid of rank $d+m\ge m$ on the set $E$ and $U$ be a uniform matroid of rank $e+m\ge m$ on the set $E'$. Fix an order such that every element of $E$ is smaller than any element of $E'$. If a basis $B$ of the matroid $\tau^m(N\oplus U)$ satisfies $|B\cap E|\neq d$, then every element, that is externally active with respect to $B$, is contained in the set $E$. And if $|B\cap E|\neq d+m$, then every element, that is internally active with respect to $B$, is contained in the set $E$.

\begin{proof}

Bases of the matroid $\tau^m(M\oplus U)$ are unions $S\cup S'$ of two independent sets $S, S'$ of each matroids $M, U$ satisfying $|S|+|S'|=d+e+m$. If $|S|\neq d+m$, then there is an element $w\in E\setminus S$ such that $S\cup\{w\}$ is an independent set of $N$. This implies that an element $v$ of $S'$ cannot be internally active with respect to $S\cup S'$ because $S\cup S'\cup\{w\}\setminus \{v\}$ is a basis of $\tau^m(N\oplus U_{k+m,k+l+m})$ which implies that the fundamental cocircuit $C^*(v,S\cup S')$ contains $w$ which is smaller than $v$. If $|S|\neq d$, then for every element $w\in E'\setminus S'$, $S\cup S'\cup \{w\}$ is a circuit of the matroid $\tau^m(M\oplus U)$. This implies that fundamental circuit $C(w,S\cup S')$ contains $S$ which implies that $w$ is not externally active with respect to the basis $S\cup S'$. 
    
\end{proof}
    
\end{lemma}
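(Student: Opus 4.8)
The starting point is the explicit description of the bases of $\tau^m(M\oplus U)$. Since a set is independent in a truncation precisely when it is independent in the original matroid and has size at most the truncated rank, and since the independent sets of $M\oplus U$ split as $S\cup S'$ with $S$ independent in $M$ and $S'$ independent in $U$, the bases of $\tau^m(M\oplus U)$ are exactly the sets $S\cup S'$ with $|S|+|S'|=d+e+m$, $S$ independent in $M$, and $S'$ independent in $U$. I would first record the two size bounds this forces: since $S$ is independent in $M$ we have $|S|\le d+m$, and since $|S'|\le e+m$ we have $|S|=d+e+m-|S'|\ge d$. Consequently the hypothesis $|S|\neq d$ sharpens to $|S|>d$ (equivalently $|S'|<e+m$), and the hypothesis $|S|\neq d+m$ sharpens to $|S|<d+m$. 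Writing $B=S\cup S'$, the externally active elements lie in $(E\setminus S)\cup(E'\setminus S')$ and the internally active ones lie in $S\cup S'$, so in each case it suffices to rule out the part lying in $E'$.

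For the external statement, I would fix $w\in E'\setminus S'$ and aim to show that the fundamental circuit $C(w,B)$ contains an element of $E$, which is automatically smaller than $w$. The plan is to prove that $B\cup\{w\}$ is itself a circuit of $\tau^m(M\oplus U)$, so that $C(w,B)=B\cup\{w\}\supseteq S$, and $S\neq\emptyset$ because $|S|>d\ge 0$. The key point is that $|S'|<e+m$, so $S'\cup\{w\}$ is still independent in the uniform matroid $U$; hence $B\cup\{w\}$ is independent in $M\oplus U$, of size $d+e+m+1$, one more than the truncated rank. Deleting any single element leaves a set of size $d+e+m$ that is still independent in $M\oplus U$, hence independent in $\tau^m(M\oplus U)$. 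Thus $B\cup\{w\}$ is minimally dependent, i.e.\ a circuit, so $w$ is not the least element of $C(w,B)$ and is not externally active.

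For the internal statement, I would use $|S|<d+m=r(M)$ to extend $S$: there is $w\in E\setminus S$ with $S\cup\{w\}$ independent in $M$. For any $v\in S'$, the set $(B\setminus\{v\})\cup\{w\}=(S\cup\{w\})\cup(S'\setminus\{v\})$ is independent in $M\oplus U$ of size $d+e+m$, hence a basis of $\tau^m(M\oplus U)$. That $(B\setminus\{v\})\cup\{w\}$ is a basis is exactly the statement that $w$ lies in the fundamental cocircuit $C^*(v,B)$. Since $w\in E$ is smaller than $v\in E'$, the element $v$ is not the least element of $C^*(v,B)$ and so is not internally active.

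The computations are short; the only step requiring care is matching the two hypotheses to the size bounds — confirming that $|S|\neq d$ forces $|S'|<e+m$ (so the augmentation by $w$ remains independent in $U$, which is exactly where uniformity of $U$ enters) and that $|S|\neq d+m$ guarantees a proper extension of $S$ inside $M$. Once these inequalities are in hand, the circuit argument and the basis-exchange argument are routine, and no structural information about $M$ beyond its rank is used.
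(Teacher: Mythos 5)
Your proof is correct and follows essentially the same route as the paper's: the same description of the bases of $\tau^m(M\oplus U)$ as unions $S\cup S'$ of independent sets, the same augmentation $(B\setminus\{v\})\cup\{w\}$ exhibiting $w\in C^*(v,B)$ for the internal case, and the same observation that $B\cup\{w\}$ is a circuit containing $S$ for the external case. If anything, your write-up is slightly more careful than the paper's, since you make explicit the bounds $d\le |S|\le d+m$, the fact that $S\neq\emptyset$, and exactly where uniformity of $U$ enters, all of which the paper leaves implicit.
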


\begin{proof}[Proof of Theorem \ref{exc1}]

First, assume that $m=0$. Let $B\cup B'$ be a basis of $N\oplus U_{k,k+l}$. The following equalities hold by the definition of direct sum \begin{align*}
    \inte_{N\oplus U_{k,k+l}}(B\cup B') &= \inte_N(B)+\inte_{U_{k,k+l}}(B') \\
    \exte_{N\oplus U_{k,k+l}}(B\cup B') &= \exte_N(B)+\exte_{U_{k,k+l}}(B').
\end{align*}

If $B$ is neither the smallest basis nor the largest basis of $N$, then $\inte_N(B)<k$ and $\exte_N(B)<l$. Note that any basis of a uniform matroid is either internal activity zero or external activity zero. Therefore, either $\inte_{N\oplus U_{k,k+l}}(B\cup B')<k$ or $\exte_{N\oplus U_{k,k+l}}(B\cup B')<l$.

Now, suppose that $B$ is the smallest basis of $N$. Then $\inte_N(B)=k$ and $\exte_N(B)=0$. Then  $B\cup B'$ is a $(k,l)$-basis if and only if $B'$ is the largest basis of $U_{k,k+l}$. And if $B$ is the largest basis of $N$, then $B\cup B'$ is a $(k,l)$-basis if and only if $B'$ is the smallest basis of $U_{k,k+l}$. Therefore $N\oplus U_{k,k+l}$ satisfies the second condition of Corollary \ref{exc}.

Deleting an element of $U_{k,k+l}$, the resulting matroid becomes $N\oplus U_{k-1,k+l-1}$. No basis of $U_{k-1,k+l-1}$ has internal activity $k$ and therefore a basis of $N\oplus U_{k-1,k+l-1}$ is $(k,l)$-basis if and only if it is the union of the largest basis of $U_{k-1,k+l-1}$ and the smallest basis of $N$. For an element $v$ of $N$, the deletion $N-\{v\}$ is a matroid of rank $k$ because $v$ is neither a loop or a coloop. Hence, no basis of $N-\{v\}$ has $l$ external activity. Thus, the only $(k,l)$-basis of the matroid $(N-\{v\})\oplus U_{k,k+l}$ is the union of the smallest basis of $U_{k,k+l}$ and the largest basis of $N-\{v\}$. This proves the third condition of Corollary \ref{exc}. Therefore $U_{k,k+l}\oplus N$ is an excluded minor of almost $(k,l)$-uniform matroids.

For a positive integer $m$, let $N$ be loopless and coloopless matroid of rank $k+m$ on the set of size $k+l+m$. Let $E, E'$ be the underlying set of $N, U_{k+m,k+l+m}$ respectively. Fix an order such that every element of $E$ is smaller than any element of $E'$. Hence, we may assume that $E=[k+l+m]$ and $E'=[2k+2l+2m]\setminus [k+l+m]=\{k+l+m+1,\ldots,2k+2l+2m\}$.

First consider the case $|S|\neq k$ and $|S|\neq k+m$. By Lemma \ref{lem:mid}, every internally active or externally active element is contained in the set $E$. For an element $v\in E\setminus S$, if $S\cup \{v\}$ is an independent set of $M$, then $\{v\}\cup S\cup S'$ is a circuit. Therefore, $v$ is externally active if and only if $v$ is the smallest element of the circuit $\{v\}\cup S\cup S'$. By our choice of ordering, this is equivalent to $v$ being the smallest element of the set $\{v\}\cup S$. If $S\cup \{v\}$ is not independnent, then a circuit $C$ of $N$ is contained in the set $S\cup\{v\}$. This circuit $C$ is the fundamental circuit of $v$ with respect to the basis $S\cup S'$ in the matroid $\tau^m(N\oplus U_{k+m,k+l+m})$. Thus, $v$ is externally active if and only if $v$ is the smallest element of $C$. In conclusion, $v$ is externally active with respect to the basis $S\cup S'$ if and only if $v$ is externally active with respect to the basis $S$ of the truncation $\tau^{k+m-|S|}(N)$. Dually, $v\in S$ is internally active with respect to the basis $S\cup S'$ in the matroid $\tau^m(N\oplus U_{k+m,k+l+m})$ if and only if $v$ is internally active with respect to the basis $S$ of the truncated matroid $\tau^{k+m-|S|}(N)$. Thus, the internal activity(resp. external activity) of $S\cup S'$ is the the internal activity(resp. external activity) of $S$ in $\tau^{k+m-|S|}(N)$. By Corollary \ref{cor:max}, $N$ is $(k,l)$-uniform and this implies that $\tau^{k+m-|S|}(N)$ is also $(k,l)$-uniform by Lemma \ref{lem:trun}. Hence, the inequality $(\inte(S\cup S'),\exte(S\cup S'))<(k,l)$ holds.

Now, suppose that $|S|=k$. Lemma \ref{lem:mid} implies that every element of $S'$ is not internally active with respect to $S\cup S'$. Assume that $S\cup S'$ is a basis of type $(i,j)\ge (k,l)$. Then every element of $S$ must be internally active with respect to $S\cup S'$. Thus the only possibility in this case is $S$ being the first basis of $\tau^m(N)$. Therefore, every element of $E-S$ can not be externally active with respect to $S\cup S'$. Consequently, this compels every element within $E'-S'$ to possess external activity with respect to $S\cup S'$. This implies that $S'$ must be the largest basis of $U_{k+m,k+l+m}$. Hence, there is only one basis of type $(k,l)$ with $|S|=k$. Similarly, there is also only one basis of type $(k,l)$ for the case $|S|=k+m$. Therefore, the matroid $N$ satisfies $t^N_{k,l}=2$ and $t^N_{i,j}=0$ if $(i,j)> (k,l)$. This establishes the second condition of Corollary \ref{exc}.

To finish the proof, it remains to prove that for any element $v\in E\cup E'$, $t^{\tau^m(N\oplus U_{k+m,k+l+m})-\{v\}}_{k,l}$ is $1$. For every element $v$, the matroid $\tau^m(N\oplus U_{k+m,k+l+m})-\{v\}$ obtained by deleting an element $v$ is either $\tau^m((N-\{v\})\oplus U_{k+m,k+l+m})$ or $\tau^m(N\oplus U_{k+m,k+l+m-1})$. Since $v$ is not a coloop of $N$, $N-\{v\}$ is a matroid of rank $k+m$ on the set of size $k+l+m-1$. Again, Lemma \ref{lem:mid} implies that if a basis $S\cup S'$ of $\tau^m((N-\{v\})\oplus U_{k+m,k+l+m})$ satisfies $|S|\neq k,k+m$, then every internally active or externally active element is contained in the set $E-\{v\}$. And $v\in E$ is internally active(resp. externally active) with respect to $S\cup S'$ iff $v$ is internally active(resp. externally active) with respect to the basis $S$ of the matroid $\tau^{k+m-|S|}(N-\{v\})$. It is clear that the matroid $N-\{v\}$ is $(k,l)$-uniform and so is its truncation by Lemma \ref{lem:trun}. This leads to the inference that $S\cup S'$ cannot be a $(i,j)$-basis for $(i,j)\ge (k,l)$. And if $|S|=k+m$, then Lemma \ref{lem:mid} implies that every externally active element of $S\cup S'$ is contained in $E$ which implies $\exte(S\cup S')\le |E|-|S|=l-1$. Hence, the only $(k,l)$-basis of $\tau^m((N-\{v\})\oplus U_{k+m,k+l+m})$ is the union of the smallest basis of $\tau^m(N-\{v\})$ and the largest basis of $U_{k+m,k+l+m}$. A similar proof is applicable to the case $\tau^m(N\oplus U_{k+m,k+l+m-1})$.
    
\end{proof}

The most simple loopless and coloopless excluded minor of almost $(1,1)$-uniform, or just almost uniform, matroids can be constructed using Theorem \ref{exc1} which was previously missing from \cite[Theorem 3.13]{lara2020exploring}.

\begin{example}

$U_{1,2}\oplus U_{1,2}$ is an excluded minor of almost uniform matroids. For every element $v$, deleting $v$ from this matroid results in $U_{1,2}\oplus U_{1,1}$. Deleting the coloop of $U_{1,2}\oplus U_{1,1}$ leaves us with $U_{1,2}$, which is a uniform matroid. For an element that is not a coloop, deleting the element results in $U_{2,2}$ which is also uniform. Therefore $U_{1,2}\oplus U_{1,1}$ is almost uniform. Contraction by any element $v$ results in $U_{1,2}\oplus U_{0,1}$ and similar argument shows that this matroid is also almost uniform.
    
\end{example}

In general, given a loopless and coloopless matroid $N,N'$ of rank $k+m$ on the set $[k+l+m]$, the matroid $\tau^m(N\oplus N')$ may fail to be an excluded minor of almost $(k,l)$-uniform matroids.

\begin{example}

Let $N=N'=U_{1,2}\oplus U_{1,2}$. Then $N\oplus N'=U_{1,2}\oplus U_{1,2}\oplus U_{1,2}\oplus U_{1,2}$ is not an excluded minor of almost $(2,2)$-uniform matroids since its Tutte polynomial is $(x+y)^4=x^4+4x^3y+6x^2y^2+4xy^3+y^4$.
    
\end{example}

\section{Further questions}\label{sec:q}

In Section \ref{sec:cons}, we constructed infinitely many excluded minors of almost $(k,l)$-uniform matroids. Is there any other excluded minor of almost $(k,l)$-uniform matroids exists? 

Let the class $A^h\mathcal{U}(k,l)$, encompassing matroids that are almost $(k,l)$-uniform with the term `almost' iterated $h$ times. Is the class  $A^h\mathcal{U}(k,l)$ also characterized by Tutte polynomial coefficients? Identify excluded minors of the class $A^h\mathcal{U}(k,l)$.

\citet[Conjecture~1.6]{MR2821559} proposed a conjecture that predicts the proportion of paving matroids among all matroids on the set $[n]$ will approach $1$ as $n$ increases to infinity, see \cite{MR3367125,MR3367294} for some related results. Motivated by this conjecture, we propose a new conjecture which suggests that almost all $(k+1,l)$-uniform matroids are $(k,l)$-uniform or dually, almost all $(k,l+1)$-uniform matroids are $(k,l)$-uniform unless $(k,l)=(1,1)$. For the special case of $(k,l)=(2,1)$, this statement is weaker than the conjecture of \citet[Conjecture~1.6]{MR2821559}.

\begin{conjecture}

Let $m_n(k,l)$ be the number of all $(k,l)$-uniform matroids on the set $[n]$. If $(k,l)\neq (1,1)$, then the following limit holds:

\begin{itemize}
    \item $\lim_{n\to\infty} \frac{m_n(k,l)}{m_n(k+1,l)}\to 1$; or dually,
    \item $\lim_{n\to\infty} \frac{m_n(k,l)}{m_n(k,l+1)}\to 1$.
\end{itemize}

\end{conjecture}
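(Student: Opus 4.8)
The plan is to prove the equivalent statement that the \emph{exceptional} class vanishes in proportion. Since $U_{k,k}\oplus U_{0,l}$ is itself a minor of $U_{k+1,k+1}\oplus U_{0,l}$, excluding the former excludes the latter, so $\mathcal U(k,l)\subseteq\mathcal U(k+1,l)$; hence $m_n(k,l)\le m_n(k+1,l)$ and
$$1-\frac{m_n(k,l)}{m_n(k+1,l)}=\frac{|\mathcal U(k+1,l)\setminus\mathcal U(k,l)|}{m_n(k+1,l)}.$$
Thus the first bullet is equivalent to $|\mathcal U(k+1,l)\setminus\mathcal U(k,l)|=o\!\left(m_n(k+1,l)\right)$. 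Moreover, matroid duality sends a $U_{k,k}\oplus U_{0,l}$ minor to a $U_{l,l}\oplus U_{0,k}$ minor, so it restricts to a bijection between $(k,l)$-uniform and $(l,k)$-uniform matroids on $[n]$, giving $m_n(k,l)=m_n(l,k)$; consequently the second bullet is the first bullet applied to $(l,k)$. I would therefore aim at the single statement $|\mathcal U(k+1,l)\setminus\mathcal U(k,l)|=o(m_n(k+1,l))$ for every $(k,l)\ne(1,1)$.

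First I would record a doubly exponential lower bound. Whenever $(k,l)\ne(1,1)$ either $k\ge 2$, so $(2,1)\le(k,l)$ and every paving matroid lies in $\mathcal U(k,l)$, or $k=1$ and $l\ge 2$, so by duality $m_n(1,l)=m_n(l,1)\ge m_n(2,1)$. In all cases $m_n(k+1,l)\ge m_n(k,l)\ge m_n(2,1)=2^{2^{\,n-o(n)}}$, using the known doubly exponential count of (sparse) paving matroids \cite{MR2821559,MR3367125,MR3367294}. Since $f=o(L)$ together with $L\le g$ forces $f=o(g)$, it then suffices to prove that the exceptional count is $o(m_n(2,1))$: any upper bound on the exceptions that beats the paving lower bound already drives the ratio to $1$.

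The crux is to bound $|\mathcal U(k+1,l)\setminus\mathcal U(k,l)|$ from above. By condition~\ref{item:2}, a member of this set is exactly a matroid in which every corank-$(k+1)$ flat has nullity $<l$ while some corank-$k$ flat $F$ has nullity $\ge l$; that is, $M$ carries a single distinguished ``thick'' flat yet remains uniform-like one corank higher. I would try to encode such an $M$ by the data $(F,\,M|F,\,M/F)$ together with the gluing information along $F$, and exploit the corank-$(k+1)$ constraint to show that $M|F$ is itself almost uniform on its hyperplanes, so that the thick flat is rigid and can be specified at a cost that is a vanishing fraction of the paving count. The natural machinery is the entropy-counting and covering technique used to control $\log\log$ of the number of matroids, comparing the $\log\log$ growth of the exceptional class against the value $n-o(n)$ realized by paving matroids.

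The hard part is precisely this upper bound. The exceptional class is itself expected to be doubly exponential with the \emph{same} leading double logarithm $n$, so no crude sub-doubly-exponential estimate can be available; one must instead extract a genuinely vanishing fraction, which is exactly the obstruction that keeps the Mayhew--Newman--Welsh--Whittle conjecture \cite{MR2821559} open. Indeed, as noted above, the instance $(k,l)=(2,1)$ follows from that conjecture, so no argument sidestepping its difficulty can settle even the first case, and I would expect progress to come from sharpening the covering/entropy bounds until the presence of one thick corank-$k$ flat is shown to cost a factor tending to $0$ relative to $m_n(2,1)$, rather than from any elementary minor- or activity-theoretic manipulation.
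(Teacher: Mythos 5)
First, a point of order: this statement is presented in the paper as a \emph{conjecture} (Section \ref{sec:q}); the paper offers no proof of it, so your attempt must stand or fall on its own, and it does not stand --- it is a research program, not a proof. Your preliminary reductions are correct and worth keeping: since $U_{k,k}\oplus U_{0,l}$ is a minor of $U_{k+1,k+1}\oplus U_{0,l}$, indeed $\mathcal{U}(k,l)\subseteq\mathcal{U}(k+1,l)$; duality does carry a $U_{k,k}\oplus U_{0,l}$ minor to a $U_{l,l}\oplus U_{0,k}$ minor, so $m_n(k,l)=m_n(l,k)$ and the two bullets are a single statement under swapping $(k,l)\mapsto(l,k)$ (this is exactly what the paper's ``or dually'' records); and the description of $\mathcal{U}(k+1,l)\setminus\mathcal{U}(k,l)$ via the corank/nullity condition \ref{item:2} is right. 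But the entire mathematical content of the conjecture is the missing estimate $|\mathcal{U}(k+1,l)\setminus\mathcal{U}(k,l)|=o\bigl(m_n(k+1,l)\bigr)$, and for it you offer only intentions (``I would try to encode\ldots exploit\ldots show\ldots''). You concede as much in your last paragraph, observing that even the instance $(k,l)=(2,1)$ is only known to follow from the open conjecture of \cite{MR2821559}; a proof proposal whose own text certifies that its key step is an open problem has a gap by its own admission.

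Two further concrete weaknesses in the sketch itself. The doubly exponential lower bound $m_n(2,1)=2^{2^{\,n-o(n)}}$ buys nothing here: as you yourself note, the exceptional class has the \emph{same} leading double logarithm $n$ (e.g.\ $U_{1,1}\oplus P$ for $P$ a non-uniform sparse paving matroid on $[n-1]$ lies in $\mathcal{U}(3,1)\setminus\mathcal{U}(2,1)$), so agreement at the $\log\log$ scale is compatible with the ratio tending to any value in $[0,1]$, and no existing entropy or covering bound resolves counts at the finer resolution the ratio demands. Second, the proposed encoding of an exceptional $M$ by $(F,\,M|F,\,M/F)$ plus ``gluing information'' is not a well-defined parametrization: amalgams of $M|F$ and $M/F$ along a flat are in general neither unique nor canonically describable, and an exceptional matroid may have many corank-$k$ flats of nullity at least $l$ rather than one distinguished one, so even the combinatorial skeleton of the intended counting argument is not in place. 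In short: the framing and the duality reduction are sound and match the paper's implicit setup, but the statement remains exactly as open after your proposal as before it.
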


\bibliographystyle{plainnat}
\bibliography{ref}

\begin{thebibliography}{25}
\providecommand{\natexlab}[1]{#1}
\providecommand{\url}[1]{\texttt{#1}}
\expandafter\ifx\csname urlstyle\endcsname\relax
  \providecommand{\doi}[1]{doi: #1}\else
  \providecommand{\doi}{doi: \begingroup \urlstyle{rm}\Url}\fi

\bibitem[Acketa(1988)]{MR943728}
Dragan~M. Acketa.
\newblock On binary paving matroids.
\newblock \emph{Discrete Math.}, 70\penalty0 (1):\penalty0 109--110, 1988.
\newblock ISSN 0012-365X.
\newblock \doi{10.1016/0012-365X(88)90085-4}.
\newblock URL \url{https://doi.org/10.1016/0012-365X(88)90085-4}.

\bibitem[Bansal et~al.(2015)Bansal, Pendavingh, and van~der Pol]{MR3367125}
Nikhil Bansal, Rudi~A. Pendavingh, and Jorn~G. van~der Pol.
\newblock On the number of matroids.
\newblock \emph{Combinatorica}, 35\penalty0 (3):\penalty0 253--277, 2015.
\newblock ISSN 0209-9683.
\newblock \doi{10.1007/s00493-014-3029-z}.
\newblock URL \url{https://doi.org/10.1007/s00493-014-3029-z}.

\bibitem[B\'{e}rczi et~al.(2023)B\'{e}rczi, Kir\'{a}ly, Schwarcz, Yamaguchi,
  and Yokoi]{MR4499860}
Krist\'{o}f B\'{e}rczi, Tam\'{a}s Kir\'{a}ly, Tam\'{a}s Schwarcz, Yutaro
  Yamaguchi, and Yu~Yokoi.
\newblock Hypergraph characterization of split matroids.
\newblock \emph{J. Combin. Theory Ser. A}, 194:\penalty0 Paper No. 105697, 15,
  2023.
\newblock ISSN 0097-3165.
\newblock \doi{10.1016/j.jcta.2022.105697}.
\newblock URL \url{https://doi.org/10.1016/j.jcta.2022.105697}.

\bibitem[Bonin et~al.(2003)Bonin, de~Mier, and Noy]{MR2018421}
Joseph Bonin, Anna de~Mier, and Marc Noy.
\newblock Lattice path matroids: enumerative aspects and {T}utte polynomials.
\newblock \emph{J. Combin. Theory Ser. A}, 104\penalty0 (1):\penalty0 63--94,
  2003.
\newblock ISSN 0097-3165.
\newblock \doi{10.1016/S0097-3165(03)00122-5}.
\newblock URL \url{https://doi.org/10.1016/S0097-3165(03)00122-5}.

\bibitem[Borovik et~al.(2003)Borovik, Gelfand, and White]{MR1989953}
Alexandre~V. Borovik, I.~M. Gelfand, and Neil White.
\newblock \emph{Coxeter matroids}, volume 216 of \emph{Progress in
  Mathematics}.
\newblock Birkh\"{a}user Boston, Inc., Boston, MA, 2003.
\newblock ISBN 0-8176-3764-8.
\newblock \doi{10.1007/978-1-4612-2066-4}.
\newblock URL \url{https://doi.org/10.1007/978-1-4612-2066-4}.

\bibitem[Brylawski(1972)]{MR0309764}
Thomas~H. Brylawski.
\newblock A decomposition for combinatorial geometries.
\newblock \emph{Trans. Amer. Math. Soc.}, 171:\penalty0 235--282, 1972.
\newblock ISSN 0002-9947,1088-6850.
\newblock \doi{10.2307/1996381}.
\newblock URL \url{https://doi.org/10.2307/1996381}.

\bibitem[Cameron and Mayhew(2021)]{MR4224380}
Amanda Cameron and Dillon Mayhew.
\newblock Excluded minors for the class of split matroids.
\newblock \emph{Australas. J. Combin.}, 79:\penalty0 195--204, 2021.
\newblock ISSN 1034-4942.

\bibitem[Castillo and Liu(2018)]{MR3869454}
Federico Castillo and Fu~Liu.
\newblock Berline-{V}ergne valuation and generalized permutohedra.
\newblock \emph{Discrete Comput. Geom.}, 60\penalty0 (4):\penalty0 885--908,
  2018.
\newblock ISSN 0179-5376.
\newblock \doi{10.1007/s00454-017-9950-3}.
\newblock URL \url{https://doi.org/10.1007/s00454-017-9950-3}.

\bibitem[De~Loera et~al.(2009)De~Loera, Haws, and K\"{o}ppe]{MR2556462}
Jes\'{u}s~A. De~Loera, David~C. Haws, and Matthias K\"{o}ppe.
\newblock Ehrhart polynomials of matroid polytopes and polymatroids.
\newblock \emph{Discrete Comput. Geom.}, 42\penalty0 (4):\penalty0 670--702,
  2009.
\newblock ISSN 0179-5376.
\newblock \doi{10.1007/s00454-008-9080-z}.
\newblock URL \url{https://doi.org/10.1007/s00454-008-9080-z}.

\bibitem[Drummond(2021)]{MR4290142}
George Drummond.
\newblock A generalisation of uniform matroids.
\newblock \emph{Adv. in Appl. Math.}, 131:\penalty0 Paper No. 102248, 10, 2021.
\newblock ISSN 0196-8858.
\newblock \doi{10.1016/j.aam.2021.102248}.
\newblock URL \url{https://doi.org/10.1016/j.aam.2021.102248}.

\bibitem[Ferroni(2022)]{MR4396506}
Luis Ferroni.
\newblock Matroids are not {E}hrhart positive.
\newblock \emph{Adv. Math.}, 402:\penalty0 Paper No. 108337, 27, 2022.
\newblock ISSN 0001-8708.
\newblock \doi{10.1016/j.aim.2022.108337}.
\newblock URL \url{https://doi.org/10.1016/j.aim.2022.108337}.

\bibitem[Ferroni and Schr{\"o}ter(2022)]{Ferroni2022a}
Luis Ferroni and Benjamin Schr{\"o}ter.
\newblock The merino--welsh conjecture for split matroids.
\newblock \emph{Annals of Combinatorics}, Dec 2022.
\newblock ISSN 0219-3094.
\newblock \doi{10.1007/s00026-022-00628-w}.

\bibitem[Geelen and Humphries(2006)]{MR2272246}
Jim Geelen and Peter~J. Humphries.
\newblock Rota's basis conjecture for paving matroids.
\newblock \emph{SIAM J. Discrete Math.}, 20\penalty0 (4):\penalty0 1042--1045,
  2006.
\newblock ISSN 0895-4801.
\newblock \doi{10.1137/060655596}.
\newblock URL \url{https://doi.org/10.1137/060655596}.

\bibitem[Hartmanis(1959)]{MR0099931}
Juris Hartmanis.
\newblock Lattice theory of generalized partitions.
\newblock \emph{Canadian J. Math.}, 11:\penalty0 97--106, 1959.
\newblock ISSN 0008-414X,1496-4279.
\newblock \doi{10.4153/CJM-1959-013-8}.
\newblock URL \url{https://doi.org/10.4153/CJM-1959-013-8}.

\bibitem[Joswig and Schr\"{o}ter(2017)]{MR3663497}
Michael Joswig and Benjamin Schr\"{o}ter.
\newblock Matroids from hypersimplex splits.
\newblock \emph{J. Combin. Theory Ser. A}, 151:\penalty0 254--284, 2017.
\newblock ISSN 0097-3165.
\newblock \doi{10.1016/j.jcta.2017.05.001}.
\newblock URL \url{https://doi.org/10.1016/j.jcta.2017.05.001}.

\bibitem[Kingan and Lemos(2002)]{MR1900004}
S.~R. Kingan and Manoel Lemos.
\newblock Almost-graphic matroids.
\newblock \emph{Adv. in Appl. Math.}, 28\penalty0 (3-4):\penalty0 438--477,
  2002.
\newblock ISSN 0196-8858.
\newblock \doi{10.1006/aama.2001.0791}.
\newblock URL \url{https://doi.org/10.1006/aama.2001.0791}.
\newblock Special issue in memory of Rodica Simion.

\bibitem[Lara~Tejeda(2020)]{lara2020exploring}
Jonathan Lara~Tejeda.
\newblock Exploring matroid minors.
\newblock 2020.

\bibitem[Mayhew et~al.(2011)Mayhew, Newman, Welsh, and Whittle]{MR2821559}
Dillon Mayhew, Mike Newman, Dominic Welsh, and Geoff Whittle.
\newblock On the asymptotic proportion of connected matroids.
\newblock \emph{European J. Combin.}, 32\penalty0 (6):\penalty0 882--890, 2011.
\newblock ISSN 0195-6698.
\newblock \doi{10.1016/j.ejc.2011.01.016}.
\newblock URL \url{https://doi.org/10.1016/j.ejc.2011.01.016}.

\bibitem[Merino et~al.(2012)Merino, Noble, Ram\'{\i}rez-Ib\'{a}\~{n}ez, and
  Villarroel-Flores]{MR2950481}
Criel Merino, Steven~D. Noble, Marcelino Ram\'{\i}rez-Ib\'{a}\~{n}ez, and
  Rafael Villarroel-Flores.
\newblock On the structure of the {$h$}-vector of a paving matroid.
\newblock \emph{European J. Combin.}, 33\penalty0 (8):\penalty0 1787--1799,
  2012.
\newblock ISSN 0195-6698.
\newblock \doi{10.1016/j.ejc.2012.04.002}.
\newblock URL \url{https://doi.org/10.1016/j.ejc.2012.04.002}.

\bibitem[Oxley(2011)]{MR2849819}
James Oxley.
\newblock \emph{Matroid theory}, volume~21 of \emph{Oxford Graduate Texts in
  Mathematics}.
\newblock Oxford University Press, Oxford, second edition, 2011.
\newblock ISBN 978-0-19-960339-8.
\newblock \doi{10.1093/acprof:oso/9780198566946.001.0001}.
\newblock URL \url{https://doi.org/10.1093/acprof:oso/9780198566946.001.0001}.

\bibitem[Oxley and Taylor(2014)]{MR3131892}
James Oxley and Jesse Taylor.
\newblock On two classes of nearly binary matroids.
\newblock \emph{European J. Combin.}, 36:\penalty0 251--260, 2014.
\newblock ISSN 0195-6698.
\newblock \doi{10.1016/j.ejc.2013.07.017}.
\newblock URL \url{https://doi.org/10.1016/j.ejc.2013.07.017}.

\bibitem[Pendavingh and van~der Pol(2015)]{MR3367294}
Rudi Pendavingh and Jorn van~der Pol.
\newblock On the number of matroids compared to the number of sparse paving
  matroids.
\newblock \emph{Electron. J. Combin.}, 22\penalty0 (2):\penalty0 Paper 2.51,
  17, 2015.

\bibitem[Rajpal(1998)]{MR1639777}
Sanjay Rajpal.
\newblock On binary {$k$}-paving matroids and {R}eed-{M}uller codes.
\newblock \emph{Discrete Math.}, 190\penalty0 (1-3):\penalty0 191--200, 1998.
\newblock ISSN 0012-365X.
\newblock \doi{10.1016/S0012-365X(98)00045-4}.
\newblock URL \url{https://doi.org/10.1016/S0012-365X(98)00045-4}.

\bibitem[Truemper(1992)]{MR1168966}
K.~Truemper.
\newblock A decomposition theory of matroids. {VI}. {A}lmost regular matroids.
\newblock \emph{J. Combin. Theory Ser. B}, 55\penalty0 (2):\penalty0 235--301,
  1992.
\newblock ISSN 0095-8956.
\newblock \doi{10.1016/0095-8956(92)90044-X}.
\newblock URL \url{https://doi.org/10.1016/0095-8956(92)90044-X}.

\bibitem[Vega(2020)]{vega2020excluded}
Vanessa~Natalie Vega.
\newblock Excluded minors for nearly-paving matroids.
\newblock 2020.

\end{thebibliography}
\end{document}